\newcommand{\lyxdot}{.}
\numberwithin{equation}{section}
\numberwithin{figure}{section}
  \theoremstyle{plain}
  \newtheorem*{thm*}{\protect\theoremname}
\theoremstyle{plain}
\newtheorem{thm}{\protect\theoremname}
  \theoremstyle{plain}
  \newtheorem{lem}[thm]{\protect\lemmaname}
  \theoremstyle{definition}
  \newtheorem{defn}[thm]{\protect\definitionname}
  \theoremstyle{plain}
  \newtheorem{cor}[thm]{\protect\corollaryname}
  \theoremstyle{definition}
  \newtheorem{problem}[thm]{\protect\problemname}
  \theoremstyle{plain}
  \newtheorem{prop}[thm]{\protect\propositionname}
  \providecommand{\corollaryname}{Corollary}
  \providecommand{\definitionname}{Definition}
  \providecommand{\lemmaname}{Lemma}
  \providecommand{\problemname}{Problem}
  \providecommand{\propositionname}{Proposition}
  \providecommand{\theoremname}{Theorem}
\providecommand{\theoremname}{Theorem}
\begin{document}

\title{On Polygons Admitting a Simson Line as Discrete Analogs of Parabolas}

\author{Emmanuel Tsukerman}

\date{\today}

\maketitle

\section{Introduction}

The Simson-Wallace Theorem (see, e.g., \cite{Riegel}) is a classical
result in plane geometry. It states that
\begin{thm*}
(Simson-Wallace Theorem%
\footnote{One remark concerning the theorem is that the Simson-Wallace Theorem
is most commonly known as ``Simson's Theorem'', even though ``Wallace
is known to have published the theorem in $1799$ while no evidence
exists to support Simson's having studied or discovered the lines
that now bear his name'' \cite{Riegel}. This is perhaps one of the
many examples of Stigler's law of eponymy.%
}). \label{(Simson-Wallace-Theorem)}Given a triangle $\triangle ABC$
and a point $P$ in the plane, the pedal points of $P$ (That is,
the feet of the perpendiculars dropped from $P$ to the sides of the
triangle) are collinear if and only if $P$ is on the circumcircle
of $\triangle ABC$.
\end{thm*}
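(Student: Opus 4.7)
The plan is to work entirely with directed angles modulo $\pi$, which sidesteps the configuration issues (whether $P$ is inside the triangle, outside on a particular arc, etc.) that plague the classical proofs. Let $X,Y,Z$ denote the pedal points of $P$ on $BC$, $CA$, $AB$ respectively.

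The first step is to exhibit three natural cyclic quadrilaterals. Because $\angle PXB = \angle PZB = \pi/2$, the points $B,X,P,Z$ lie on a circle of diameter $BP$; similarly $C,Y,P,X$ lie on a circle of diameter $CP$, and $A,Z,P,Y$ lie on a circle of diameter $AP$. From the first two circles one reads off, in directed angles,
\[
\angle(XZ,XB) \;=\; \angle(PZ,PB), \qquad \angle(XY,XC) \;=\; \angle(PY,PC).
\]
Since $X$ lies on line $BC$, the lines $XB$ and $XC$ coincide, so $X,Y,Z$ are collinear precisely when the two left-hand sides above agree. The translation step is therefore to rewrite the collinearity condition as
\[
\angle(PZ,PB) \;=\; \angle(PY,PC), \qquad \text{equivalently} \qquad \angle(PZ,PY) \;=\; \angle(PB,PC).
\]

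Next I would use the third cyclic quadrilateral $A,Z,P,Y$ to identify the left-hand side with a triangle angle. Inscribed angles in that circle give $\angle(PZ,PY) = \angle(AZ,AY) = \angle(AB,AC)$. Substituting, the collinearity of $X,Y,Z$ is equivalent to
\[
\angle(AB,AC) \;=\; \angle(PB,PC),
\]
which is the standard directed-angle criterion for $A,B,C,P$ to be concyclic. This yields both directions of the Simson--Wallace Theorem at once.

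The only genuine obstacle is bookkeeping: each of the inscribed-angle identifications above must be justified as a directed-angle statement rather than an unsigned one, and one must remember that $A,B,C$ are not collinear (so $\angle(AB,AC)\ne 0$), ensuring that the final equivalence is nondegenerate. Once the directed-angle framework is in place, the proof reduces to the three one-line inscribed-angle identities above plus a single algebraic rearrangement.
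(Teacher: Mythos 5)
The paper does not prove this statement: it is quoted as a classical result and attributed to the literature (the citation to Riegel), so there is no in-paper argument to compare yours against. Your directed-angle proof is the standard modern one and it is correct. The three circles on diameters $AP$, $BP$, $CP$ are exactly the right auxiliary objects; the identities $\angle(XZ,XB)=\angle(PZ,PB)$, $\angle(XY,XC)=\angle(PY,PC)$, and $\angle(PZ,PY)=\angle(AZ,AY)=\angle(AB,AC)$ are all legitimate instances of the directed inscribed-angle theorem, and the rearrangement $\angle(PZ,PB)=\angle(PY,PC)\iff\angle(PZ,PY)=\angle(PB,PC)$ is valid modulo $\pi$. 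The payoff of this route, as you note, is that both implications come out simultaneously and no case analysis on the position of $P$ is needed; the only caveats are the genuinely degenerate positions (e.g.\ $P$ equal to a vertex, where two pedal points coincide), which you have already flagged and which do not affect the statement. This is a cleaner argument than the unsigned-angle proofs one usually finds in the sources the paper cites, which must treat the inside/outside configurations separately.
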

\noindent \begin{flushleft}
Such a line is called a Simson line of $P$ with respect to $\triangle ABC$. 
\par\end{flushleft}

A natural question is whether an $n$-gon with $n\geq4$ can admit
a Simson line. In \cite{Johnson-book} and \cite{RadkoTsukerman},
it is shown that every quadrilateral possesses a unique Simson Line,
called ``the Simson Line of a complete%
\footnote{A complete quadrilateral is the configuration formed by $4$ lines
in general position and their $6$ intersections. When it comes to
pedals, we are only concerned with the sides making up the polygon.
Since we extend these, the pedal of a quadrilateral is equivalent
to that of its complete counterpart. For this reason, we will refer
to a polygon simply by the number of sides it has.%
} quadrilateral''. We call a polygon which admits a Simson line a
\textit{Simson polygon}. In this paper, we show that there is a strong
connection between Simson polygons and the seemingly unrelated parabola.

We begin by proving a few general facts about Simson polygons. We
use an inductive argument to show that no convex $n$-gon, $n\geq5$,
admits a Simson Line. We then determine a property which characterizes
Simson $n$-gons and show that one can be constructed for every $n\geq3$.
We proceed to show that a parabola can be viewed as a limit of special
Simson polygons, called \textit{equidistant Simson polygons}, and
that these polygons provide the best piecewise linear continuous approximation
to the parabola. Finally, we show that equidistant Simson polygons
can be viewed as discrete analogs of parabolas and that they satisfy
a number of results analogous to the pedal property, optical property,
properties of Archimedes triangles and Lambert's Theorem of parabolas.
The corresponding results for parabolas are easily obtained by applying
a limit process to the equidistant Simson polygons.

\section{General Properties of Simson Polygons}

We begin with an easy Lemma. Throughout, we will use the notation
that $(XYZ)$ is the circle through points $X,Y,Z$. 
\begin{lem}
\label{lem:angle circle and distances}Let $S$ be a point in the
interior of two rays $AB$ and $AC$. Suppose that $ABSC$ is cyclic,
and let $X$ be a point on ray $AB$ such that $|AX|<|AB|$. Let $Y=(AXS)\cap AC$.
Then $|AY|>|AC|$.\end{lem}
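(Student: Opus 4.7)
The idea is pure angle chasing with the two cyclic quadrilaterals $ABSC$ and $AXSY$, together with the exterior-angle inequality applied twice. Set $\beta := \angle ABS$. From the cyclic quadrilateral $ABSC$ (whose vertices appear in this cyclic order because $S$ lies strictly interior to the angle $\angle BAC$), the opposite-angle relation gives $\angle ACS = 180^\circ - \beta$.

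Next, because $X$ lies strictly between $A$ and $B$, the angle $\angle AXS$ is the exterior angle at $X$ of triangle $XBS$, so $\angle AXS = \beta + \angle XSB > \beta$. To transfer this across the second circle, I would verify that $X$ and $Y$ lie on opposite arcs of the chord $AS$ in $(AXS)$: they lie in opposite open half-planes of line $AS$, containing $B$ and $C$ respectively, and these half-planes are opposite precisely because $S$ is interior to $\angle BAC$. Consequently $AXSY$ is a cyclic quadrilateral with $A,S$ opposite, so $\angle AXS + \angle AYS = 180^\circ$, and thus $\angle AYS < 180^\circ - \beta = \angle ACS$.

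To conclude $|AY|>|AC|$, I would argue by contradiction. If $Y$ lay strictly between $A$ and $C$ on ray $AC$, then $\angle AYS$ would itself be the exterior angle at $Y$ of triangle $YCS$, forcing $\angle AYS > \angle YCS = \angle ACS$ and contradicting the inequality just obtained. The degenerate possibilities $Y = A$ (ruled out by definition of the second intersection) and $Y = C$ (which would make $(AXS)=(ACS)$, placing $B$ on $(AXS)$ and hence three collinear points $A,X,B$ on a single circle) are easy to dismiss.

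The main subtlety I anticipate is the configuration bookkeeping in the middle step: verifying that the chord $AS$ actually separates $X$ and $Y$ on the circle $(AXS)$, so that the opposite-angle identity applies with the correct sign and $AXSY$ is non-self-intersecting. Everything else is a symmetric application of the exterior-angle inequality on the two rays through $A$.
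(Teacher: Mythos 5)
Your proposal is correct and follows essentially the same route as the paper: the inequality $\angle AXS > \angle ABS$, the two opposite-angle identities in the cyclic quadrilaterals $ABSC$ and $AXSY$, and the concluding monotonicity step translating the angle inequality $\angle AYS < \angle ACS$ into $|AY| > |AC|$. You simply make explicit the exterior-angle justifications and configuration checks that the paper leaves implicit.
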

\begin{proof}
Since $|AX|<|AB|$, $\angle AXS>\angle ABS$. Since $ABSC$ and $AXSY$
are cyclic, $\angle ACS=\pi-\angle ABS$ and $\angle AYS=\pi-\angle AXS$.
Therefore $\angle AYS<\angle ABS$ so that $|AY|>|AC|$.

\begin{figure}[h]
\includegraphics[scale=0.5]{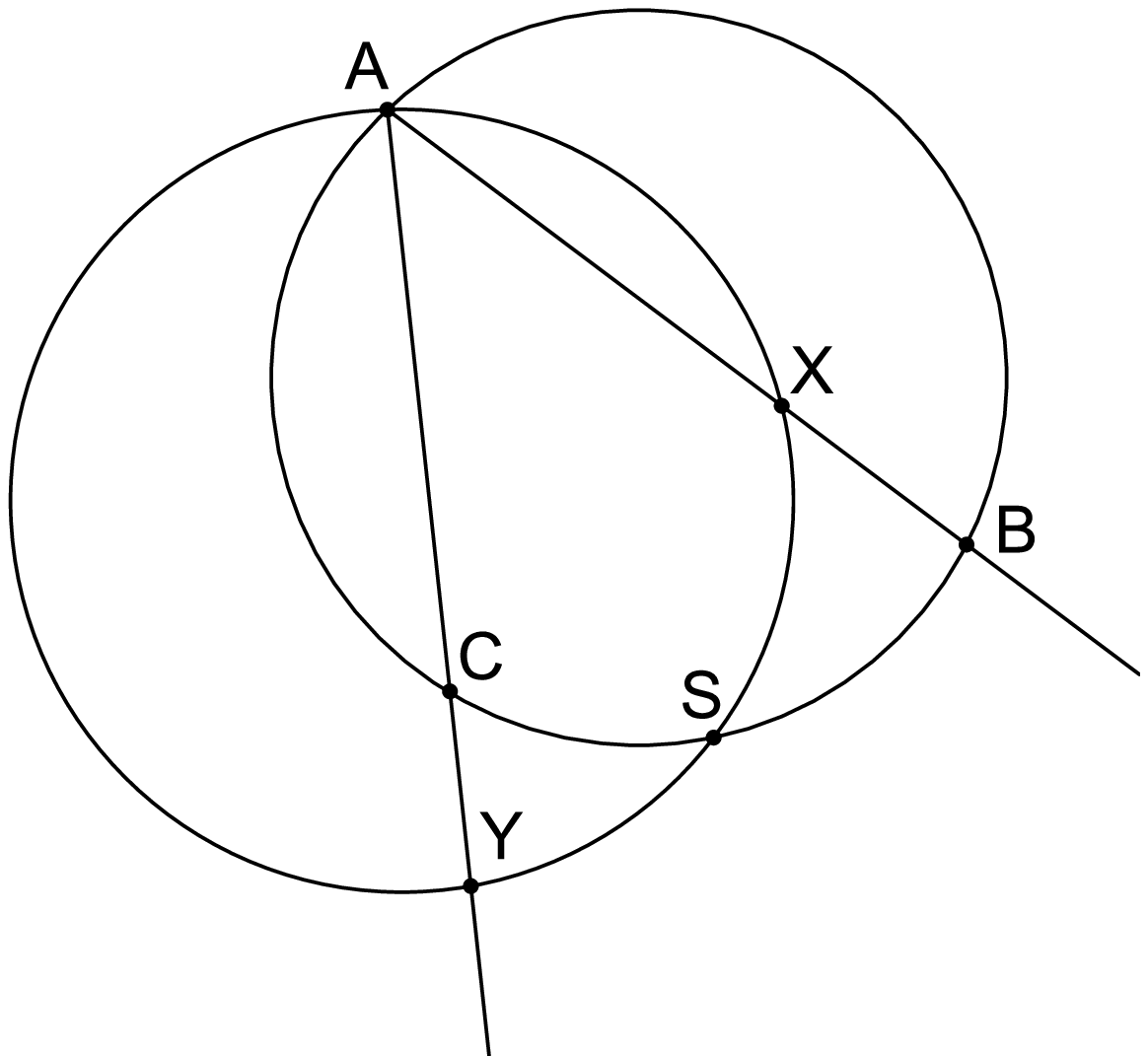}\includegraphics[scale=0.5]{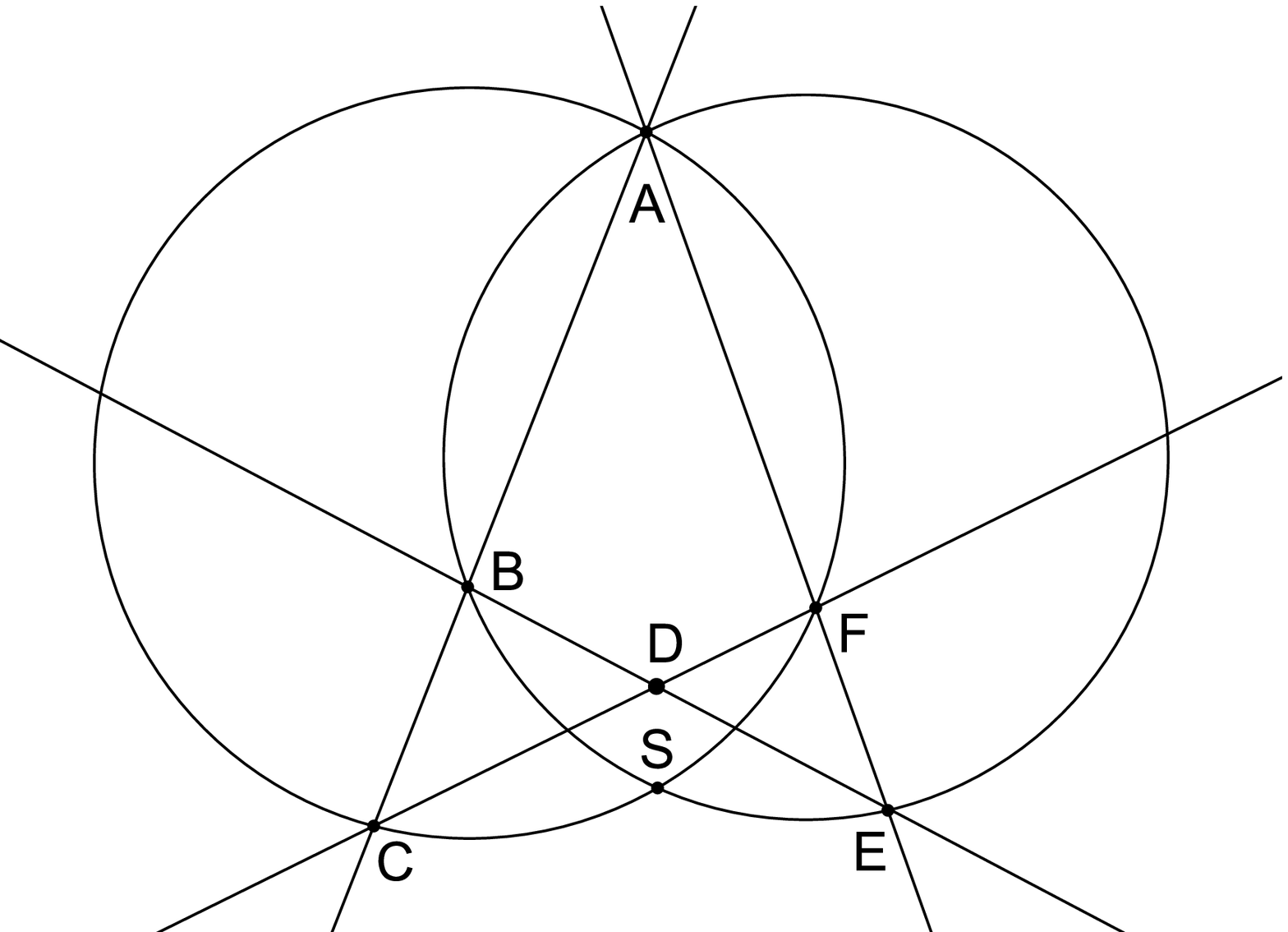}

\caption{\label{fig:Lemma 1 and 2}Lemma \ref{lem:angle circle and distances}
and Lemma \ref{cor:complete quad}}

\end{figure}

\end{proof}
As mentioned in the introduction, in the case of a quadrilateral there
is always a unique Simson point defined as a point from which the
projections into the sides are collinear. Let $A,B,C,D,E,F$ denote
the vertices of the complete quadrilateral, as in fig. \ref{fig:Lemma 1 and 2}.
It is shown in \cite{Johnson-book} that the Simson point is the unique
intersection of $(AFC)\cap(ABE)\cap(BCD)\cap(DEF)$, also known as
the \textit{Miquel point of a complete quadrilateral}. Using Lemma
\ref{lem:angle circle and distances}, we can conclude the following:
\begin{lem}
\label{cor:complete quad}Let $ABCDEF$ be a complete quadrilateral
where points in each of the triples $A,B,C$; $B,D,E$, etc. as in
fig. \ref{fig:Lemma 1 and 2} are collinear and angle $\angle CDE$
is obtuse. Denote the Miquel point of $ABCDEF$ by $S$. There exist
no two points $X$ and $Y$ on rays $AF$, $AB$ respectively with
$|AX|<|AF|$, $|AY|<|AB|$ such that $(AXY)$ passes through $S$.\end{lem}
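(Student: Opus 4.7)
The plan is to derive a contradiction by assuming that such $X$ and $Y$ exist and applying Lemma~\ref{lem:angle circle and distances} to the circle $(ABE)$ through the Miquel point $S$.

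First I would record the positional consequences of the hypothesis that $\angle CDE$ is obtuse, as read off from Figure~\ref{fig:Lemma 1 and 2}. In the standard labeling of the complete quadrilateral this hypothesis pins down the configuration in which (i) the Miquel point $S$ lies in the interior of the angle $\angle BAF$, (ii) the vertex $F$ lies strictly between $A$ and $E$ on the ray $AF$, so that $|AF|<|AE|$, and (iii) $B$ lies strictly between $A$ and $C$ on the ray $AB$. Intuitively, the obtuseness of $\angle CDE$ selects the case in which $B$ and $F$ are the ``inner'' vertices of the quadrangle $BCFE$ with respect to $A$ (equivalently, $\angle BDC = \pi-\angle CDE$ is acute, which forces $D$ to lie on the far side of segment $BE$ from $A$), and this in turn forces $S$ into the interior of $\angle BAF$. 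These three facts are really all that the obtuse-angle hypothesis is contributing.

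Once these facts are granted, the proof is a one-line application of Lemma~\ref{lem:angle circle and distances}. Since $S\in(ABE)$ with $B$ on ray $AB$ and $E$ on ray $AF$, the quadrilateral $AESB$ is cyclic with $S$ in the interior of the rays $AE$ and $AB$. The assumption $|AX|<|AF|$ combined with (ii) gives $|AX|<|AE|$, so $X$ qualifies as the point ``$X$'' of Lemma~\ref{lem:angle circle and distances} applied with the roles of ``$B$'' and ``$C$'' in that lemma played by $E$ and $B$ respectively. The lemma then forces the second intersection of the circle $(AXS)$ with line $AB$ to lie at distance strictly greater than $|AB|$ from $A$. But $Y$ is exactly this second intersection: by hypothesis $S\in(AXY)$, so $(AXY)=(AXS)$, and this circle meets line $AB$ only at $A$ and at $Y$. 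Hence $|AY|>|AB|$, contradicting $|AY|<|AB|$.

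The main obstacle is the configurational step in the first paragraph, namely verifying rigorously that the obtuse-angle hypothesis implies both the interior location of $S$ in $\angle BAF$ and the orderings $|AF|<|AE|$, $|AB|<|AC|$. Once the labeling in Figure~\ref{fig:Lemma 1 and 2} is fixed this amounts to a short case analysis using $\angle BDC=\pi-\angle CDE$ acute together with the cyclic relations $ABES$ and $ACFS$; after that, the application of Lemma~\ref{lem:angle circle and distances} is essentially automatic. (One could equally well conclude by symmetry using the circle $(ACF)$ and the ordering $|AB|<|AC|$, which provides a sanity check on the configurational step.)
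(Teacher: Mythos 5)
Your proof is correct and follows essentially the same route as the paper, which likewise just invokes the fact that the Miquel point lies on $(AFC)$ and $(ABE)$ and applies Lemma \ref{lem:angle circle and distances}; you simply spell out the application via $(ABE)$ (the paper's two-line proof leaves the choice of circle and the orderings $|AF|<|AE|$, $|AB|<|AC|$ implicit, reading them off the figure exactly as you do). The only point worth noting is that your configurational claims (i)--(iii) are asserted rather than derived from the obtuseness of $\angle CDE$, but the paper is no more explicit on this point.
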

\begin{proof}
The Miquel point $S$ lies on $(AFC)$ and $(ABE)$. By Lemma \ref{lem:angle circle and distances},
no such $X$ and $Y$ exist.
\end{proof}

We call a polygon for which no three vertices lie on a line nondegenerate.
In Lemma \ref{lem:no pair of parallel sides} and Theorems \ref{thm:convex pentagon does not admit simson}
and \ref{thm:convex ngon does not admit simson} we will assume that
the polygon is nondegenerate.
\begin{lem}
\label{lem:no pair of parallel sides}If $\Pi=V_{1}\cdot\cdot\cdot V_{n}$,
$n\geq5$ is a convex Simson polygon, then $\Pi$ has no pair of parallel
sides. \end{lem}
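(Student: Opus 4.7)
The plan is to argue by contradiction: suppose $\Pi$ has parallel sides $V_iV_{i+1}$ and $V_jV_{j+1}$, let $P$ denote the Simson point of $\Pi$ with Simson line $\ell$, and let $F_k$ denote the foot of perpendicular from $P$ to side $V_kV_{k+1}$. The first step is to show that $P$ itself must lie on $\ell$. Since $V_iV_{i+1} \parallel V_jV_{j+1}$, the perpendiculars $PF_i$ and $PF_j$ share a common direction, so they lie on a single line through $P$. Since the two parallel sides are distinct, one checks easily that $F_i \neq F_j$; hence this common line equals the line $F_iF_j$, which is contained in $\ell$. Therefore $P \in \ell$.

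With $P \in \ell$, I analyze an arbitrary other side $V_kV_{k+1}$. Its foot $F_k$ lies on $\ell$, and $PF_k \perp V_kV_{k+1}$ by definition. If $F_k \neq P$, then the line $PF_k$ coincides with $\ell$, forcing $\ell \perp V_kV_{k+1}$ and hence $V_kV_{k+1} \parallel V_iV_{i+1}$. If instead $F_k = P$, then $P$ lies on the line containing $V_kV_{k+1}$. In either case, every side of $\Pi$ falls into one of two classes: those parallel to $V_iV_{i+1}$, or those whose supporting line passes through $P$.

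The finish uses two elementary facts about convex polygons. First, since the outward normals of the sides of a convex polygon monotonically rotate through $2\pi$ as one traverses the boundary, at most two sides can be parallel to any given direction. Second, since from any point in the plane there are at most two support lines of a convex region (the two tangents, if $P$ is exterior; the two sides at a vertex; one at an interior point of a side; none if $P$ is strictly interior), at most two sides of $\Pi$ can have their containing lines through $P$. Combining, $\Pi$ has at most $2+2=4$ sides, contradicting $n\geq 5$.

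The argument is short and the only potentially delicate point is handling the degenerate case $F_k=P$, which could a priori prevent one from concluding $V_kV_{k+1}\perp\ell$; this is absorbed cleanly by placing such sides into the second class and invoking the support-line bound. No subtlety is needed about where $P$ lies relative to $\Pi$ because the two classes cover all possibilities uniformly.
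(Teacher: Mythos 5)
Your proof is correct and shares the paper's two main steps: you first show that the Simson point lies on the Simson line (which is then perpendicular to the two parallel sides), and then deduce the dichotomy that every remaining side is either parallel to the given pair or has its supporting line through the Simson point. Where you diverge is the endgame. The paper uses nondegeneracy to rule out two consecutive sides through $S$ (and two consecutive parallel sides), concludes that the sides must \emph{alternate} between the two classes, and then asserts that no such polygon can be convex; you instead bound each class directly --- at most two sides of a convex polygon are parallel to a fixed direction, and at most two sides lie on support lines through a fixed point --- yielding $n\le 4$ at once. Your count is arguably the cleaner finish: it bypasses the alternation argument and the paper's unproved ``it is easy to see'' step, and it needs no nondegeneracy in the final stage. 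The only point worth making fully explicit is that two distinct sides of a nondegenerate convex polygon cannot lie on a common line, so each of the two support lines through $P$ accounts for at most one side; with that remark your $2+2$ bound is airtight.
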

\begin{proof}
By the nondegeneracy assumption, it is clear that no two consecutive
sides can be parallel. So suppose that $V_{1}V_{2}\parallel V_{i}V_{i+1}$,
$i\notin\{1,2,n\}$. Then $S$ lies on the Simson line $L$ orthogonal
to $V_{1}V_{2}$ and $V_{i}V_{i+1}$. The projection of $S$ into
each other side $V_{j}V_{j+1}$ must also lie on $L$, so that either
$V_{j}V_{j+1}$ is parallel to $V_{1}V_{2}$ or it passes through
$S$. By the nondegeneracy assumption, no two consecutive sides can
pass through $S$. Therefore the sides of $\Pi$ must alternate between
being parallel to $V_{1}V_{2}$ and passing through $S$. It is easy
to see that no such polygon can be convex. 
\end{proof}
It is worth noting that both the convexity hypothesis and the restriction
to $n\geq5$ in the last result are necessary, for one can construct
a non-convex $n$-gon, $n\geq5$ having pairs of parallel sides and
the trapezoid (if not a parallelogram) is a convex Simson polygon
with $n=4$ having a pair of parallel sides. Using the above result,
we can prove:
\begin{thm}
\label{thm:convex pentagon does not admit simson}A convex pentagon
does not admit a Simson point.\end{thm}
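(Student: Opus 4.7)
The plan is to derive a contradiction by combining the Simson--Wallace Theorem with Lemma~\ref{cor:complete quad}, applied to a suitable complete quadrilateral sitting inside the pentagon. Suppose for contradiction that the convex pentagon $\Pi=V_1V_2V_3V_4V_5$ admits a Simson point $S$, and write $L_i$ for the line through side $V_iV_{i+1}$ (indices mod $5$) and $\alpha_i$ for the interior angle of $\Pi$ at $V_i$. By Lemma~\ref{lem:no pair of parallel sides} no two $L_i$ are parallel. Since $\sum\alpha_i=3\pi$ and each $\alpha_i<\pi$, at least one interior angle is obtuse; after cyclically relabeling the vertices I may assume $\alpha_3>\pi/2$.

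I consider the complete quadrilateral $Q$ formed by the four lines $L_1,L_2,L_3,L_4$. The four pedals of $S$ onto these lines are collinear (they lie on the Simson line of $\Pi$), so $S$ is the Miquel point of $Q$. Let $A:=L_1\cap L_4$; the vertex of $Q$ opposite $A$ (not lying on either line through $A$) is $V_3=L_2\cap L_3$. Meanwhile the fifth line $L_5$ meets $L_1$ in $V_1$ and $L_4$ in $V_5$, so $L_1,L_4,L_5$ bound the triangle $\triangle AV_1V_5$. Since the pedals of $S$ onto $L_1,L_4,L_5$ are also collinear, the Simson--Wallace Theorem places $S$ on the circumcircle $(AV_1V_5)$.

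I now apply Lemma~\ref{cor:complete quad} to $Q$ taking $D:=V_3$, and label the remaining vertices $B,C,E,F$ of $Q$ so that $\angle CDE$ equals the obtuse one of the two supplementary angles at $V_3$---one of these is $\alpha_3>\pi/2$ by our assumption. The lemma then rules out the existence of a circle through $A,S$ meeting the rays $AF,AB$ at points $X,Y$ with $|AX|<|AF|$ and $|AY|<|AB|$. Setting $X=V_1$ and $Y=V_5$ would give exactly such a circle, contradicting the lemma, provided that $V_1$ lies strictly between $A$ and $F$ on $L_1$ and $V_5$ lies strictly between $A$ and $B$ on $L_4$.

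The main obstacle is this last positional check. The cyclic order of $A,V_1,V_2,F$ on line $L_1$ (and of $A,V_5,V_4,B$ on line $L_4$) depends on the angles $\alpha_1,\alpha_5$ of $\Pi$ at the endpoints of the omitted side $L_5$. A careful convexity argument---potentially exchanging the labeling roles of $E,F$ and of $B,C$ as needed---shows that convexity of $\Pi$, together with the cyclic relabeling that placed the obtuse angle at $V_3$, always positions $V_1,V_5$ in the prohibited near-near regions, delivering the contradiction with Lemma~\ref{cor:complete quad}.
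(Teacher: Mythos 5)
Your overall strategy is the same as the paper's: drop one side of the pentagon, observe that $S$ must be the Miquel point of the complete quadrilateral formed by the remaining four lines, use Simson--Wallace to place $S$ on the circumcircle of the small triangle that the dropped side cuts off at the apex $A$, and contradict Lemma~\ref{cor:complete quad}. (The paper drops side $CD$ and reaches $S\in(FCD)$, $F=BC\cap DE$, via a second Miquel point; your direct appeal to Simson--Wallace for the three lines $L_1,L_4,L_5$ is a mild streamlining.) The difficulty is exactly the step you flag as ``the main obstacle,'' and as written it is a genuine gap, because the normalization you chose does not control it. Whether $V_1$ lies between $A$ and the quadrilateral's other vertices on $L_1$ (and $V_5$ likewise on $L_4$) is governed by the interior angles $\alpha_1,\alpha_5$ at the endpoints of the \emph{omitted} side: the rays from $V_1$ toward $V_2$ and from $V_5$ toward $V_4$ diverge on the pentagon's side of $V_5V_1$, so that $A$ falls beyond $V_1$ and beyond $V_5$, precisely when $\alpha_1+\alpha_5>\pi$. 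If $\alpha_1+\alpha_5<\pi$, those rays meet on the pentagon's side, the order on $L_1$ is $V_1,V_2,A$, hence $|AV_1|>|AV_2|$, and the ``near--near'' contradiction is unavailable for this choice of omitted side. Making $\alpha_3$ obtuse does not rule this out: take $\alpha_1=\alpha_5=4\pi/9$ and $\alpha_2=\alpha_3=\alpha_4=19\pi/27$; all five angles are less than $\pi$, $\alpha_3$ is obtuse, yet $\alpha_1+\alpha_5<\pi$.

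The repair is a different pigeonhole: since $\sum_{i}(\alpha_i+\alpha_{i+1})=2\sum_i\alpha_i=6\pi$, some consecutive pair of interior angles sums to at least $6\pi/5>\pi$, so you may relabel so that the omitted side is the one joining that pair. (This is also the normalization the paper silently relies on: its inequality $|FC|<|FB|$ already presupposes that $F=BC\cap DE$ lies beyond $C$ and beyond $D$, i.e.\ that the angles at $C$ and $D$ sum to more than $\pi$.) Even after this, you still need to verify that at least one of the two Miquel circles of $Q$ through $A$ meets \emph{both} rays farther from $A$ than $V_1$ and $V_5$ respectively --- this depends on where $L_2$ and $L_3$ cut the lines $L_1$ and $L_4$ --- before Lemma~\ref{lem:angle circle and distances} closes the argument. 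So the architecture matches the paper, but the positional verification, which is the only nontrivial content beyond the Miquel-point formalities, is missing, and the announced reduction (an obtuse angle at the vertex opposite $A$) does not supply it.
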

\begin{proof}
Let $\Pi=ABCDE$ be a nondegenerate convex pentagon. Suppose that
$S$ is a point for which the pedal in $\Pi$ is a line. Then in particular
the pedal is a line for every 4 sides of the pentagon. Therefore if
$BC\cap DE=F$, then $S$ must be a Simson point for $ABFE$, so that
$S$ is the Miquel point of $ABFE$. This implies that 
\[
S=(GAB)\cap(GFE)\cap(HAE)\cap(HBF),
\]
where $BC\cap AE=G$ and $AB\cap DE=H$. By the same reasoning applied
to quadrilateral $CGED$, $S$ must be the Miquel point of $CGED$.
Therefore $S$ lies on $(FCD)$. Because $\Pi$ is convex, $|FC|<|FB|$
and $|FD|<|FE|$. We can now apply corollary \ref{cor:complete quad}
with $C$ and $D$ playing the role of points $X$ and $Y$ to conclude
that $S$ cannot lie on $(FCD)$ - a contradiction. 
\end{proof}
Consider a convex polygon $\Pi$ as the boundary of the intersection
of half planes $H_{1},H_{2}$,...,$H_{n}$. Then the polygon formed
from the boundary of $\underset{\underset{i=1}{i\neq k}}{\overset{n}{\cap}}H_{i}$
for $k\in\{1,2,...,n\}$ is also convex. 

We are now ready to prove the following result by induction:
\begin{thm}
\label{thm:convex ngon does not admit simson}A convex $n$-gon with
$n\geq5$ does not admit a Simson point.\end{thm}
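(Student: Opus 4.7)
The plan is to proceed by induction on $n$, with Theorem \ref{thm:convex pentagon does not admit simson} supplying the base case $n = 5$.

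For the inductive step, suppose the result holds for convex $(n-1)$-gons with $n - 1 \ge 5$, and let $\Pi = V_1 V_2 \cdots V_n$ be a nondegenerate convex $n$-gon, $n \ge 6$, admitting a Simson point $S$. I would write $\Pi$ as the boundary of the intersection $\bigcap_{i=1}^n H_i$ of its defining half-planes and then delete any one $H_k$ to produce a convex Simson $(n-1)$-gon, contradicting the induction hypothesis. Concretely, by Lemma \ref{lem:no pair of parallel sides} the lines supporting the sides adjacent to $V_k V_{k+1}$ are not parallel, so they meet at a point $W$; convexity of $\Pi$ places $W$ on the far side of line $V_k V_{k+1}$ from the interior of $\Pi$. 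By the remark preceding the theorem, the polygon $\Pi' = V_1 \cdots V_{k-1}\, W\, V_{k+2} \cdots V_n$ obtained as $\partial\bigl(\bigcap_{i \ne k} H_i\bigr)$ is convex; moreover every one of the retained $n-1$ half-planes still contributes a side, since a point witnessing that $H_j$ was needed in $\bigcap_{i} H_i$ also witnesses that $H_j$ is needed in $\bigcap_{i\ne k} H_i$. Since each side of $\Pi'$ lies on a line carrying a side of $\Pi$, the pedal points of $S$ onto the sides of $\Pi'$ are $n-1$ of its pedal points with respect to $\Pi$ and remain collinear on the same Simson line, so $S$ is a Simson point for $\Pi'$.

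The step I expect to be the main obstacle is verifying that $\Pi'$ is nondegenerate, so that the induction hypothesis genuinely applies. I would argue, first, that $W$ is distinct from every $V_j$ with $j \notin \{k,k+1\}$: otherwise such a $V_j$ would lie on one of the lines $V_{k-1}V_k$ or $V_{k+1}V_{k+2}$, producing three collinear vertices of $\Pi$. Second, three non-consecutive vertices of the strictly convex $\Pi'$ cannot be collinear, since then the middle vertex on the line would fail to be extreme. Third, for three consecutive collinear vertices of $\Pi'$, either all three lie in $\{V_1,\ldots,V_{k-1},V_{k+2},\ldots,V_n\}$, contradicting the nondegeneracy of $\Pi$ directly, or the triple involves $W$; in that case collinearity of $V_{k-1},W,V_{k+2}$ forces $V_{k+2}$ onto the line $V_{k-1}V_k$, and the other two triples $V_{k-2},V_{k-1},W$ and $W,V_{k+2},V_{k+3}$ are handled analogously, each producing three collinear vertices of $\Pi$. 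In every case the nondegeneracy of $\Pi$ yields the needed contradiction, so $\Pi'$ is a nondegenerate convex Simson $(n-1)$-gon and the induction hypothesis closes the argument.
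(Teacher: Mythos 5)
Your proposal is correct and takes essentially the same approach as the paper's own proof: induction from the pentagon base case, truncating one side by intersecting the lines of its two neighbors (which meet by Lemma \ref{lem:no pair of parallel sides}) to obtain a convex $(n-1)$-gon that inherits the Simson point, contradicting the inductive hypothesis. Your extra verification that the truncated polygon is nondegenerate (and that every retained half-plane still contributes a side) merely fills in details the paper leaves implicit in its remark about intersections of half-planes.
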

\begin{proof}
The base case has been established. Assume the hypothesis for $n\geq5$,
and consider the case for an $(n+1)$-gon $\Pi$ with vertices $V_{1},...,V_{n+1}$.
Suppose that $\Pi$ admits a Simson point. Let $V_{n-1}V_{n}\cap V_{n+1}V_{1}=V'$.
This intersection exists by Lemma \ref{lem:no pair of parallel sides}.
Since $\Pi$ admits a Simson point, $\Pi'=V_{1}...V_{n-1}V'$ must
also admit one. By the preceding remark, $\Pi'$ is convex, and since
it has $n$ sides, the hypothesis is contradicted. Therefore $\Pi$
cannot admit a Simson line, completing the induction.
\end{proof}
Now that we have established that no convex $n$-gon (with $n\geq5$)
admits a Simson line, we will proceed to find a necessary and sufficient
condition for an $n$-gon $\Pi=V_{1}V_{2}...V_{n}$ to have a Simson
point. Let $W_{i}=V_{i-1}V_{i}\cap V_{i+1}V_{i+2}$ for each $i$,
with $V_{n+k}=V_{k}$. In case that $V_{i-1}V_{i}$ and $V_{i+1}V_{i+2}$
are parallel, view $W_{i}$ as a point at infinity and $(V_{i}W_{i}V_{i+1})$
as the line $V_{i}V_{i+1}$. For example, in a right-angled trapezoid
with $AB\perp BC$ and $AB\perp AD$, $S$ will necessarily lie on
the line $AB$ (in fact $S=AB\cap CD$).
\begin{thm}
An $n$-gon $\Pi=V_{1}\cdot\cdot\cdot V_{n}$ admits a Simson point
$S$ if and only if all circles $(V_{i}W_{i}V_{i+1})$ have a common
intersection.\end{thm}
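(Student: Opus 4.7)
The plan is to reduce the statement to repeated applications of the Simson--Wallace Theorem itself, the key observation being that, for each index $i$, the three consecutive lines carrying sides $V_{i-1}V_i$, $V_iV_{i+1}$, $V_{i+1}V_{i+2}$ form a triangle whose three vertices are precisely $V_i$, $V_{i+1}$, and $W_i$ (since $V_{i-1}V_i\cap V_iV_{i+1}=V_i$ and $V_iV_{i+1}\cap V_{i+1}V_{i+2}=V_{i+1}$). Hence the circumcircle of this triangle is exactly $(V_iW_iV_{i+1})$, and by the Simson--Wallace Theorem, the pedals of a point $S$ onto these three lines are collinear if and only if $S\in(V_iW_iV_{i+1})$.

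For the forward direction, assume $S$ is a Simson point of $\Pi$, so that the pedals $P_1,\dots,P_n$ of $S$ onto the sides of $\Pi$ all lie on a common line. In particular, for every $i$ the three pedals $P_{i-1},P_i,P_{i+1}$ are collinear, and so by the observation above $S\in(V_iW_iV_{i+1})$. This shows $S$ lies in the intersection of all the circles.

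For the converse, suppose $S$ lies on every circle $(V_iW_iV_{i+1})$. By the same observation, for each $i$ the three pedals $P_{i-1},P_i,P_{i+1}$ are collinear; call their common line $L_i$. Since $L_i$ and $L_{i+1}$ share the two points $P_i$ and $P_{i+1}$, provided these are distinct we have $L_i=L_{i+1}$, and iterating around the polygon yields a single line containing every pedal. Thus $S$ is a Simson point. The parallel case is handled by interpreting $(V_iW_iV_{i+1})$ as the line $V_iV_{i+1}$, as stated in the remark preceding the theorem; this is the natural limiting form of the Simson--Wallace condition when two of the three triangle sides become parallel (the pedals onto two parallel lines and a transversal are collinear precisely when $S$ lies on the transversal).

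The only genuine obstacle is the chaining step in the converse: one must rule out the degenerate possibility that $P_i=P_{i+1}$ for some $i$, since in that case $L_i$ and $L_{i+1}$ would not be forced to coincide. But $P_i$ and $P_{i+1}$ are the feet of perpendiculars from $S$ onto two distinct adjacent lines $V_{i-1}V_i$ and $V_iV_{i+1}$; their coincidence forces the common foot to be $V_i$ and $SV_i$ to be orthogonal to both lines, which (since these lines meet at $V_i$ at a nonzero angle) is possible only if $S=V_i$, a case in which $S$ is trivially a Simson point because all pedals collapse to $V_i$ and any line through $V_i$ serves as a Simson line. Modulo this degeneracy, the argument is complete.
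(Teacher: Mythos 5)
Your proposal is correct and follows essentially the same route as the paper: identify the triangle cut out by three consecutive side-lines as $V_iW_iV_{i+1}$, apply the Simson--Wallace Theorem to each such triple, and chain the resulting collinearity conditions around the polygon. You are in fact somewhat more careful than the paper, which silently assumes the two shared pedals $P_i,P_{i+1}$ are distinct when gluing $L_i$ to $L_{i+1}$; your treatment of that degeneracy (and of the parallel case via the stated convention) is a welcome addition but not a different argument.
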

\begin{proof}
Assume first that $S$ is a Simson point for $\Pi$. The projections
of $S$ into $V_{i-1}V_{i}$, $V_{i}V_{i+1}$ and $V_{i+1}V_{i+2}$
are collinear. By the Simson-Wallace Theorem (Theorem \ref{(Simson-Wallace-Theorem)}),
$S$ is on the circumcircle of $V_{i}W_{i}V_{i+1}$.

Conversely, let $S=\underset{i}{\cap}(V_{i}W_{i}V_{i+1})$. For each
$i$, this implies that the projections of $S$ into $V_{i-1}V_{i}$
and $V_{i+1}V_{i+2}$ are collinear. As $i$ ranges from $1$ to $n$
we see that all projections of $S$ into the sides are collinear.
\end{proof}
To construct an $n$-gon with a given Simson point $S$ and Simson
line $L$, let $X_{1}$, $X_{2}$,...,$X_{n}$ be $n$ points on $L$.
The $n$ lines the $i$th of which is perpendicular to $SX_{i}$ and
passing through $X_{i}$, $i=1,...,n$ are the sides of an $n$-gon
with Simson point $S$ and Simson line $L$. The $X_{i}$ are the
projections of $S$ into the sides of the $n$-gon and the vertices
are $V_{i}=SX_{i}\cap SX_{i+1}$, $i=1,...,n$.

\begin{figure}[H]
\includegraphics[scale=0.4]{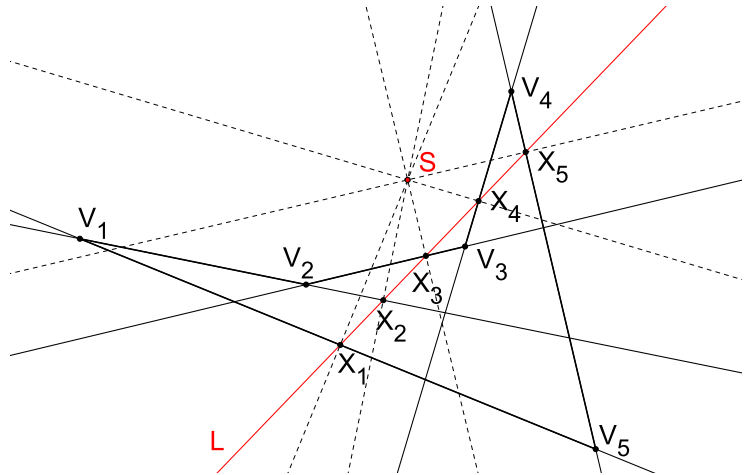}

\caption{A construction of a pentagon $V_{1}V_{2}V_{3}V_{4}V_{5}$ with Simson
point $S$ and Simson line $L$. The points $X_{1},...,X_{5}$ on
$L$ are the projections of $S$ into the sides of the pentagon. }
\end{figure}

\section{Simson Polygons and Parabolas}

In this section we will show that there is a strong connection between
Simson polygons and parabolas. In particular, we may view a special
type of Simson polygons, which we call equidistant Simson polygons,
as discrete analogs of the parabola. 
\begin{defn}
Let $\Pi=V_{1}\cdot\cdot\cdot V_{n}$ be a Simson polygon with Simson
point $S$ and projections $X_{1},...,X_{n}$ of $S$ into its sides.
In the special case that $|X_{i}X_{i+1}|=\Delta$ for each $i=1,...,n-1$,
we call such a polygon $\Pi$ an \textit{equidistant Simson polygon}. 
\end{defn}
The following result shows that all but one of the vertices of an
equidistant Simson polygon lie on a parabola. Moreover, the parabola
is independent of the position of $X_{1}$ (but depends on $\Delta)$. 
\begin{thm}
\label{thm:Parabola}Let $S$ be a point and $L$ a line not passing
through $S$. Suppose that $X_{1},...,X_{n}$ are points on $L$ such
that $|X_{i}X_{i+1}|=\Delta$ for all $i=1,...,n-1$ and let $\Pi=V_{1}\cdot\cdot\cdot V_{n}$
be the equidistant Simson polygon with Simson point $S$ and projections
$X_{1},...,X_{n}$ of $S$ into its sides. Then $V_{1},...,V_{n-1}$
lie on a parabola $C$. Moreover, $C$ is independent of the position
of $X_{1}$ on $L$.\end{thm}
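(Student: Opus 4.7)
The plan is to set up coordinates that make the computation transparent. Take $L$ as the $u$-axis and write $S = (a, h)$, where $h > 0$ is the distance from $S$ to $L$. After orienting $L$ so that the common increment is $+\Delta$, we may write $X_i = (x_i, 0)$ with $x_{i+1} = x_i + \Delta$. By the construction described just before the theorem, the $i$-th side of $\Pi$ is the line through $X_i$ perpendicular to the vector $SX_i = (x_i - a, -h)$, so its equation is
\[
(x_i - a)\,u - h\, v \;=\; (x_i - a)\,x_i.
\]

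The next step is to compute $V_i$ as the intersection of sides $i$ and $i+1$. Writing the two linear equations and subtracting, using $x_{i+1} - x_i = \Delta$, one obtains $u_{V_i} = x_i + x_{i+1} - a$ at once; back-substitution into either equation yields $v_{V_i} = (x_i - a)(x_{i+1} - a)/h$.

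The final step is to eliminate $i$ from this parametrization. Setting $\xi_i = x_i - a$, we have $u_{V_i} - a = 2\xi_i + \Delta$ and $h\, v_{V_i} = \xi_i(\xi_i + \Delta)$, and combining these gives the single relation
\[
(u - a)^2 \;=\; 4 h\, v + \Delta^2,
\]
which is the equation of a parabola $C$ with axis the perpendicular from $S$ to $L$. Since this equation involves only $a$, $h$, and $\Delta$, the parabola $C$ depends only on $S$, $L$, and $\Delta$, and is independent of where $X_1$ was chosen on $L$.

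The main difficulty is bookkeeping rather than insight: one must orient $L$ so that the common increment is consistently $+\Delta$, and one must observe that $V_i$ is determined by the consecutive pair $(X_i, X_{i+1})$. This last observation also explains why the theorem asserts the parabola property only for $V_1, \ldots, V_{n-1}$: the wrap-around vertex $V_n$ is the intersection of the side at $X_n$ with the side at $X_1$, and since $X_1$ is generally not at distance $\Delta$ from $X_n$, this vertex is not forced to lie on $C$.
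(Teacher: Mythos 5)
Your proposal is correct and follows essentially the same route as the paper: place $L$ on a coordinate axis, compute the intersection of consecutive perpendicular sides explicitly, and eliminate the index to obtain a parabola whose equation involves only $S$, $L$, and $\Delta$. The only cosmetic difference is that the paper normalizes $S=(0,s)$ at the outset, whereas you carry the general $S=(a,h)$ through the computation.
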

\begin{proof}
Without loss of generality, let $S=(0,s)$, $L$ be the $x$-axis,
$X_{i}=(X+(i-1)\Delta,0)$ and $X_{i+1}=(X+i\Delta,0$). A calculation
shows that the perpendiculars at $X_{i}$ and $X_{i+1}$ to the segments
$SX_{i}$ and $SX_{i+1}$, respectively, intersect at the point $(2X+(2i-1)\Delta,\frac{(X+(i-1)\Delta)(X+i\Delta)}{s})$.
Therefore the coordinates of the intersection satisfy $y=\frac{x^{2}-\Delta^{2}}{4s}$
independently of $X$. It follows that $V_{1},...,V_{n-1}$ lie on
the parabola $y=\frac{x^{2}-\Delta^{2}}{4s}$ . 
\end{proof}
The fact that $C$ is independent of the position of $X_{1}$ on $L$
can be illustrated on figure \ref{fig:equidistant Simson on parabola}
by supposing that $X_{1},X_{2},...,X_{8}$ are being translated on
$L$ as a rigid body. Then the independence of $C$ from $X_{1}$
implies that $C$ remains fixed and $V_{1},...,V_{7}$ slide together
about $C$.

\begin{figure}[h]
\includegraphics[scale=0.2]{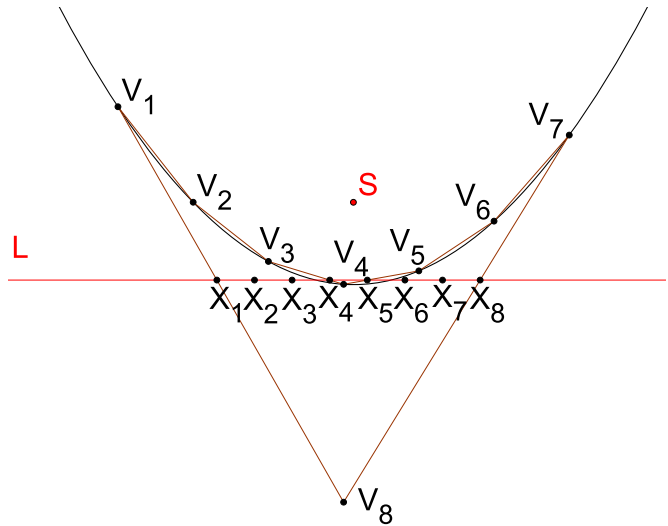}

\caption{\label{fig:equidistant Simson on parabola}Points $V_{1},...,V_{8}$
are the vertices of an equidistant Simson octagon with a Simson point
$S$, Simson line $L$ and projections $X_{1},...,X_{8}$. By Theorem
\ref{thm:Parabola}, $V_{1},...,V_{7}$ lie on a parabola.}
\end{figure}

\begin{cor}
\label{cor:Parabola Proj property}Let $C$ be a parabola with focus
$F$. The locus of projections of $F$ into the lines tangent to $C$
is the tangent to $C$ at its vertex.\end{cor}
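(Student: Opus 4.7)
The plan is to derive this corollary from Theorem~\ref{thm:Parabola} by passing to the limit $\Delta \to 0$, which is the viewpoint announced in the introduction. I would first normalize coordinates so that the focus is $F = (0, s)$ and the tangent to $C$ at its vertex is the $x$-axis $L$; in these coordinates $C$ is the parabola $y = x^{2}/(4s)$.

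For each $\Delta > 0$, applying Theorem~\ref{thm:Parabola} with $S = F$ and Simson line $L$ produces an equidistant Simson polygon whose non-terminal vertices lie on the parabola $C_\Delta : y = (x^{2} - \Delta^{2})/(4s)$ and whose side-projections $X_1,\ldots,X_n$ all lie on $L$. Since $C_\Delta \to C$ as $\Delta \to 0$, the idea is to view the sides of these polygons as discrete approximations of the tangents to $C$, and to inherit the fact that the pedals of the sides from $F$ lie on $L$.

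To realize a prescribed tangent $T$ to $C$ at a point $P = (a, a^{2}/(4s))$ as such a limit, I would choose the free parameter $X$ in Theorem~\ref{thm:Parabola} so that some projection $X_j$ equals the fixed point $M = (a/2, 0) \in L$. The $j$-th side $\ell$ is then the line through $M$ perpendicular to $FM$; crucially, $\ell$ depends only on $F$ and $M$ and is independent of $\Delta$. The two adjacent vertices $V_{j-1}, V_j \in \ell \cap C_\Delta$ can be read off the explicit formula in the proof of Theorem~\ref{thm:Parabola}, and both tend to $P$ as $\Delta \to 0$.

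The main step is to conclude that $\ell = T$. Since $\ell$ is a fixed line meeting the variable parabola $C_\Delta$ in the two points $V_{j-1}, V_j$, both of which coalesce at $P \in C$, a continuity-of-roots argument (the intersection $\ell \cap C_\Delta$ is governed by a quadratic in one coordinate) forces $\ell$ to meet the limit parabola $C$ at $P$ with multiplicity two, so $\ell$ is tangent to $C$ at $P$, i.e., $\ell = T$. The foot of the perpendicular from $F$ onto $T$ is then $M = (a/2, 0) \in L$, as claimed. I expect the tangency assertion to be the step requiring the most care, though it is short; the rest is bookkeeping with the formulas already on hand.
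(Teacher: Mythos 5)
Your proposal is correct and follows the same route as the paper: passing to the limit $\Delta\to 0$ of the equidistant Simson polygons of Theorem \ref{thm:Parabola}, whose side-projections all lie on $L$. In fact your version is more careful than the paper's two-sentence continuity argument, since you pin down the one step the paper leaves implicit --- that a fixed side line through a prescribed pedal point $M\in L$ is genuinely the tangent to $C$ at the limit point of its two vertices --- and your construction simultaneously shows every point of $L$ is attained.
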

\begin{proof}
As seen in the proof of Theorem \ref{thm:Parabola}, the coordinates
of the $V_{i}$, $i=1,...,n$ are continuous functions of $\Delta$.
Therefore as $n\rightarrow\infty$ and $\Delta\rightarrow0$ in Theorem
\ref{thm:Parabola}, the limit of the polygon is a parabola with focus
$S$ and tangent line at the vertex equal to $L$.
\end{proof}
This property can be equivalently stated as: ``the pedal curve of
the focus of a parabola with respect to the parabola is the line tangent
to it at its vertex''. This property is by no means new, but its
derivation does give a nice connection between the pedal of a polygon
and the pedal of the parabola. Specifically, we can view the focus
$F$ as the Simson point of a parabola (considered as a polygon with
infinitely many points) and the tangent at the vertex as the Simson
line of the parabola.

\begin{figure}[h]
\includegraphics[scale=0.45]{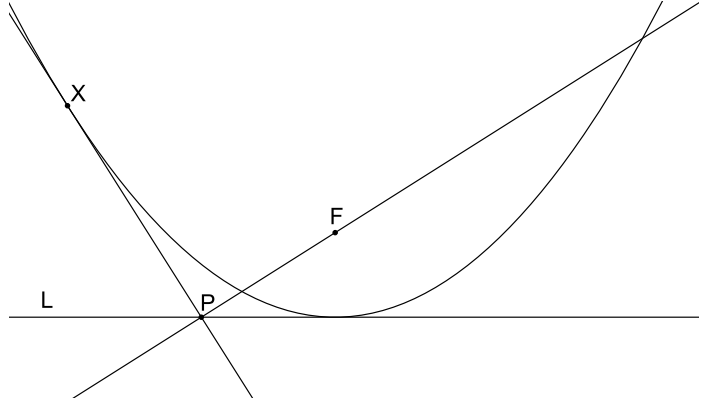}

\caption{Corollary \ref{cor:Parabola Proj property}: $X$ is a variable point
of $C$, $F$ is the focus, $P$ is the projection of $F$ into the
tangent at $X$ and $L$ is the tangent to $C$ at its vertex.}
\end{figure}

Let $V_{1}\cdot\cdot\cdot V_{n+2}$ be an equidistant Simson polygon.
We will now prove that the sides connecting the vertices $V_{1},V_{2},...,V_{n+1}$
form an optimal piecewise linear continuous approximation of the parabola.
To be precise, we show that it is a solution to the following problem:
\begin{problem}
Consider a continuous piecewise linear approximation $l(x)$ of a
parabola $f(x),$ $x\in[a,b]$ obtained by connecting several points
on the parabola. That is, let 
\[
l(x)=\frac{f(x_{i+1})-f(x_{i})}{x_{i+1}-x_{i}}(x-x_{i+1})+f(x_{i+1})\mbox{ for }x\in[x_{i},x_{i+1}]
\]

\noindent \begin{flushleft}
where $a=x_{0}<x_{1}<...<x_{n-1}<x_{n}=b$. Find $x_{1},x_{2},...,x_{n-1}\in(a,b)$
such that the error
\par\end{flushleft}

\[
\int_{a}^{b}|f(x)-l(x)|dx
\]

\noindent \begin{flushleft}
is minimal. 
\par\end{flushleft}
\end{problem}
The points $(x_{i},f(x_{i})),\mbox{ }i=0,...,n$ are called knot points
and a continuous piecewise linear approximation which solves the problem
is called optimal. Since all parabolas are similar, it suffices to
consider $f(x)=\frac{x^{2}-\Delta^{2}}{4s}$.
\begin{thm}
\label{thm:The-optimal-piecewise-continuous}The optimal piecewise-continuous
linear approximation to $f(x)$ with the setup above is given by the
sides $V_{1}V_{2}$,$V_{2}V_{3}$,...,$V_{n}V_{n+1}$ of an equidistant
Simson $(n+2)$-gon with $X_{1}=\frac{a}{2}$, $\Delta=\frac{b-a}{n}$
and $V_{i}=(a+(i-1)\Delta,f(a+(i-1)\Delta))$. The knot points $(x_{0},f(x_{0})),...,(x_{n},f(x_{n}))$
are the vertices $V_{1},V_{2},...,V_{n+1}$.\end{thm}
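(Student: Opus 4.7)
The plan rests on the observation that $f(x)=(x^{2}-\Delta^{2})/(4s)$ is a pure quadratic, so the $L^{1}$ error on each subinterval reduces to a simple cubic in the subinterval length. This turns the minimization into an elementary inequality.

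First I would exploit the convexity (or concavity, depending on the sign of $s$) of $f$: over each subinterval $[x_{i},x_{i+1}]$, the chord $l$ lies entirely on one side of the graph of $f$, so $|f-l|$ equals $\pm(l-f)$ without change of sign. Since $l-f$ is a quadratic with leading coefficient $-1/(4s)$ that vanishes at both $x_{i}$ and $x_{i+1}$, it factors explicitly as
\[
l(x)-f(x)=\frac{1}{4s}(x-x_{i})(x_{i+1}-x),
\]
and a direct integration then gives
\[
\int_{x_{i}}^{x_{i+1}}|f(x)-l(x)|\,dx=\frac{1}{24|s|}(x_{i+1}-x_{i})^{3}.
\]

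Summing over $i$, the total error becomes $E=\frac{1}{24|s|}\sum_{i=0}^{n-1}(x_{i+1}-x_{i})^{3}$, to be minimized under the fixed linear constraint $\sum_{i=0}^{n-1}(x_{i+1}-x_{i})=b-a$. Here I would invoke the strict convexity of $t\mapsto t^{3}$ on $[0,\infty)$ (equivalently, Jensen's inequality, or a one-line Lagrange multiplier computation which forces all partial derivatives $3(x_{i+1}-x_{i})^{2}$ to coincide). The unique minimizer has all spacings equal to $(b-a)/n=\Delta$, giving $x_{i}=a+i\Delta$ for $i=0,\ldots,n$.

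Finally I would verify the correspondence with the claimed equidistant Simson $(n+2)$-gon. By Theorem \ref{thm:Parabola}, the vertices of such a polygon that lie on the parabola have $x$-coordinates forming an arithmetic progression; a one-line substitution using the explicit formula from the proof of that theorem, with $X_{1}$ placed on $L$ as specified, shows that this progression is exactly $a,a+\Delta,\ldots,b$, so the parabola-vertices $V_{1},\ldots,V_{n+1}$ coincide with the optimal knot points $(x_{i},f(x_{i}))$. The only place requiring any real thought is the factorization of $l-f$, which is immediate once one remembers that both $f$ and $l-f$ are quadratics with the same leading structure; beyond that, the argument is a routine convexity inequality and a bookkeeping check.
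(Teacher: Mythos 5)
Your proposal is correct and follows essentially the same route as the paper: the same factorization $f(x)-l(x)=\frac{(x-x_i)(x-x_{i+1})}{4s}$, the same per-subinterval integral $\frac{(x_{i+1}-x_i)^3}{24|s|}$, and the same reduction to minimizing $\sum_i (x_{i+1}-x_i)^3$ over partitions of $[a,b]$. The one small difference is at the end, where you invoke Jensen's inequality (or strict convexity of $t\mapsto t^3$) to get global optimality of the equal-spacing partition directly, whereas the paper only sets the partial derivatives to zero and appeals to uniqueness of the critical point; your finish is marginally more complete, but it is the same argument in substance.
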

\begin{proof}
The equation of the $i$th line segment simplifies to 
\[
l(x)=\frac{x(x_{i+1}+x_{i})-x_{i}x_{i+1}-\Delta^{2}}{4s},\mbox{ for }x\in[x_{i},x_{i+1}].
\]

\noindent \begin{flushleft}
Therefore $f(x)-l(x)=\frac{(x-x_{i+1})(x-x_{i})}{4s}$ for $x\in[x_{i},x_{i+1}]$.
Integrating $|f(x)-l(x)|$ from $x_{i}$ to $x_{i+1}$ we get
\[
{\displaystyle \int_{x_{i}}^{x_{i+1}}|f(x)-l_{i}(x)|dx=\frac{(x_{i+1}-x_{i})^{3}}{24|s|}.}
\]

\par\end{flushleft}

\noindent \begin{flushleft}
It is enough to minimize
\[
S(x_{1},...,x_{n-1})=24|s|{\displaystyle \int}_{a}^{b}|f(x)-l(x)|dx=\sum_{i=0}^{n-1}(x_{i+1}-x_{i})^{3}.
\]

\par\end{flushleft}

\noindent \begin{flushleft}
Taking the partial derivative with respect to $x_{i}$ for $1\leq i\leq n-1$
and setting to zero, we get
\[
\frac{\partial}{\partial x_{i}}S(x_{1},...,x_{n-1})=3(x_{i}-x_{i-1})^{2}-3(x_{i+1}-x_{i})^{2}=0\iff(x_{i}-x_{i-1})^{2}=(x_{i+1}-x_{i})^{2}.
\]

\par\end{flushleft}

\noindent \begin{flushleft}
Since the points are ordered and distinct, $x_{i}=\frac{x_{i+1}+x_{i-1}}{2}$,
so that the $x_{i}$'s form an arithmetic progression. The $x$-coordinates
of the vertices $V_{i}$ satisfy this relation, and by uniqueness,
the theorem is proved. 
\par\end{flushleft}
\end{proof}
By similar reasoning, one can see that the same sides of the $(n+2)$-gon
are also optimal if the problem is modified to solving the least-squares
problem 
\[
\min_{x_{1},...,x_{n-1}}\int_{a}^{b}(f(x)-l(x))^{2}dx.
\]

$ $

From the proof of Theorem \ref{thm:The-optimal-piecewise-continuous},
we have the following interesting result about parabolas. 
\begin{cor}
Let $f(x)$ be the equation of parabola, $\Delta$ be a real number
and let $l(x)$ be the line segment with end points $(y,f(y)),(y+\Delta,f(y+\Delta))$.
Then the area

\[
\int_{y}^{y+\Delta}|f(x)-l(x)|dx
\]
bounded by $f(x)$ and $l(x)$ is independent of $y$.
\end{cor}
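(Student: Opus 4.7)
The plan is to extract this statement essentially for free from the integral computation already carried out inside the proof of Theorem \ref{thm:The-optimal-piecewise-continuous}. In that proof, starting from the normal form $f(x)=\frac{x^{2}-\Delta^{2}}{4s}$, one shows that for any chord joining $(x_{i},f(x_{i}))$ and $(x_{i+1},f(x_{i+1}))$,
\[
f(x)-l(x)=\frac{(x-x_{i+1})(x-x_{i})}{4s},\qquad x\in[x_{i},x_{i+1}],
\]
and consequently $\int_{x_{i}}^{x_{i+1}}|f(x)-l(x)|\,dx=\frac{(x_{i+1}-x_{i})^{3}}{24|s|}$. Specializing to $x_{i}=y$ and $x_{i+1}=y+\Delta$ gives $\frac{\Delta^{3}}{24|s|}$, which manifestly does not depend on $y$. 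So the bulk of the corollary is already inside the earlier computation; the only work left is to handle a general parabola rather than one in the specific normal form.

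To cover a general parabola, I would argue that the normal form assumption is harmless. Any parabola (after choosing coordinates in which its axis is vertical) has equation $f(x)=ax^{2}+bx+c$ with $a\neq0$. For fixed $y$ and $\Delta$, the difference $f(x)-l(x)$ is a quadratic polynomial in $x$ with leading coefficient $a$ that vanishes at $x=y$ and $x=y+\Delta$, hence
\[
f(x)-l(x)=a\,(x-y)(x-y-\Delta).
\]
Since this factor has constant sign on $[y,y+\Delta]$, the substitution $t=x-y$ gives
\[
\int_{y}^{y+\Delta}|f(x)-l(x)|\,dx=|a|\int_{0}^{\Delta}t(\Delta-t)\,dt=\frac{|a|\,\Delta^{3}}{6},
\]
which is independent of $y$. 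Alternatively, one can appeal to the fact that all parabolas are similar (as noted right before Theorem \ref{thm:The-optimal-piecewise-continuous}) to reduce to the normal form and quote the earlier identity directly.

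There is no real obstacle here: the content of the corollary is precisely the shift-invariance of the integral formula derived in Theorem \ref{thm:The-optimal-piecewise-continuous}. The only mild point to be careful about is ensuring that the factorization argument is presented cleanly and that the absolute value is treated correctly, using that $(x-y)(x-y-\Delta)$ has constant sign on the interval of integration (so no sign changes arise inside the absolute value).
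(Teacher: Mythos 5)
Your proposal is correct and matches the paper's approach: the paper derives this corollary directly from the identity $\int_{x_{i}}^{x_{i+1}}|f(x)-l(x)|\,dx=\frac{(x_{i+1}-x_{i})^{3}}{24|s|}$ established in the proof of Theorem \ref{thm:The-optimal-piecewise-continuous}, which depends only on the interval length. Your extra factorization argument for a general parabola $ax^{2}+bx+c$ is a harmless elaboration of the paper's implicit appeal to the similarity of all parabolas.
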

This property also explains why the $x$-coordinates of the knot points
of the optimal piecewise linear continuous approximation of the parabola
are at equal intervals. 

We now list some of the properties of equidistant Simson polygons:
\begin{thm}
\label{thm:Parallel chords}An equidistant Simson polygon $V_{1}V_{2}...V_{n}$
with projections $X_{1},X_{2},...,X_{n}$ has the following properties:\end{thm}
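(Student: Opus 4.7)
The plan is to exploit the explicit coordinate parametrization from the proof of Theorem \ref{thm:Parabola}: after the rigid motion that places $S=(0,s)$, $L$ on the $x$-axis, and $X_i=(X+(i-1)\Delta,0)$, the vertices take the explicit form
\[
V_i=\Bigl(2X+(2i-1)\Delta,\;\tfrac{(X+(i-1)\Delta)(X+i\Delta)}{s}\Bigr).
\]
Each listed property about chords $V_iV_{i+k}$ is then a finite polynomial identity in the real parameters $X,\Delta,s$, so the proof reduces to direct algebraic verification carried out once in these coordinates.

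The first step I would perform, and which will be reused throughout, is to compute the slope and midpoint of a general chord $V_iV_{i+k}$. The horizontal displacement is $2k\Delta$, while a short expansion factors the vertical displacement as $k\Delta\bigl[2X+(2i+k-1)\Delta\bigr]/s$. Consequently the slope of $V_iV_{i+k}$ equals $\bigl[2X+(2i+k-1)\Delta\bigr]/(2s)$, and the midpoint has $x$-coordinate $2X+(2i+k-1)\Delta$. In particular, both quantities depend on the pair $(i,k)$ only through the combination $2i+k$. From this the parallel-chord statement falls out immediately: two chords $V_iV_{i+k}$ and $V_jV_{j+l}$ are parallel if and only if $2i+k=2j+l$, and whenever this holds their midpoints share the same $x$-coordinate, hence lie on a common line perpendicular to $L$. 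This is the discrete analog of the classical parabolic fact that the midpoints of a family of parallel chords lie on a line parallel to the axis.

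Any remaining items on the list can be handled in the same framework. The slope of the $i$th side of $\Pi$, namely $(X+i\Delta)/s$, is read off the same parametrization, so statements relating side directions to diagonal directions (for instance, that the direction of a chord $V_iV_{i+k}$ coincides with the direction of the side tangent at the ``midpoint index''), or relating the common midpoint line to $S$, are again one-line calculations. For every such identity, passing to the limit $n\to\infty$, $\Delta\to 0$ with $(i-1)\Delta$ held fixed will reproduce the corresponding classical statement for the parabola $y=(x^2-\Delta^2)/(4s)$, exactly as in Corollary \ref{cor:Parabola Proj property}.

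The main obstacle is organizational rather than mathematical: identifying which combination of indices $(i,k)$ corresponds to each stated property, and checking that each identity is invariant under the free parameter $X$ (so that the property genuinely belongs to the polygon rather than to the chosen placement of $X_1$ along $L$, consistent with the invariance statement of Theorem \ref{thm:Parabola}). Once the chord-slope and midpoint formulas above are in hand, each property should reduce to a symbolic manipulation that a careful bookkeeping of indices completes in a few lines.
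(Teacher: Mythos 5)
Your proposal is correct and takes essentially the same route as the paper: both work in the coordinates of Theorem \ref{thm:Parabola} and observe that the slope of the chord $V_iV_j$ is $\frac{2X+(i+j-1)\Delta}{2s}$ and its midpoint has $x$-coordinate $2X+(i+j-1)\Delta$, each depending on $(i,j)$ only through the sum $i+j$, which immediately yields all three properties. One small caution: property (\ref{enu:even parallel segments}) compares the chord to the tangent of the \emph{parabola} $y=\frac{x^{2}-\Delta^{2}}{4s}$ at $V_{\frac{j+i}{2}}$ (whose slope $\frac{2X+(i+j-1)\Delta}{2s}$ comes from differentiating the parabola, not from a side of the polygon --- the sides $V_{m-1}V_m$ and $V_mV_{m+1}$ have slopes $\frac{2X+(2m-2)\Delta}{2s}$ and $\frac{2X+2m\Delta}{2s}$, which straddle but do not equal it), so your aside about ``side tangents at the midpoint index'' should be replaced by that derivative computation.
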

\begin{enumerate}
\item \label{enu:odd parallel segments}If $j-i>0$ is odd, the segments
$V_{i}V_{j}$, $V_{i+1}V_{j-1}$,...,$V_{\frac{j+i+1}{2}}V_{\frac{j+i-1}{2}}$
are parallel for every $i,j\in\{1,2,...,n-1\}$.
\item \label{enu:even parallel segments}If $j-i>0$ is even, the segments
$V_{i}V_{j}$, $V_{i+1}V_{j-1}$,...,$V_{\frac{j+i}{2}-1}V_{\frac{j+i}{2}+1}$
and the tangent to the parabola at $V_{\frac{j+i}{2}}$ are parallel
for every $i,j\in\{1,2,...,n-1\}$.
\item \label{enu:midpoints collinear}The midpoints of the parallel segments
in (\ref{enu:odd parallel segments}) (respectively (\ref{enu:even parallel segments}))
lie on a line orthogonal to the Simson line $L$.\end{enumerate}
\begin{proof}
(\ref{enu:odd parallel segments}). The slope between $V_{i}$ and
$V_{j}$ is easily calculated to be $\frac{2X+(i+j-1)\Delta}{2s}$.

(\ref{enu:even parallel segments}). Recall that the parabola is given
by $y=\frac{x^{2}-\Delta^{2}}{4s}$ so that its slope at $V_{\frac{j+i}{2}}$
is $\frac{2X+(2(\frac{j+i}{2})-1)\Delta}{2}=\frac{2X+(j+i-1)\Delta}{2}$.

(\ref{enu:midpoints collinear}). The $x$-coordinate of the midpoint
of $V_{i}V_{j}$ is $2X+(i+j-1)\Delta$.
\end{proof}
\begin{figure}[h]
\includegraphics[scale=0.2]{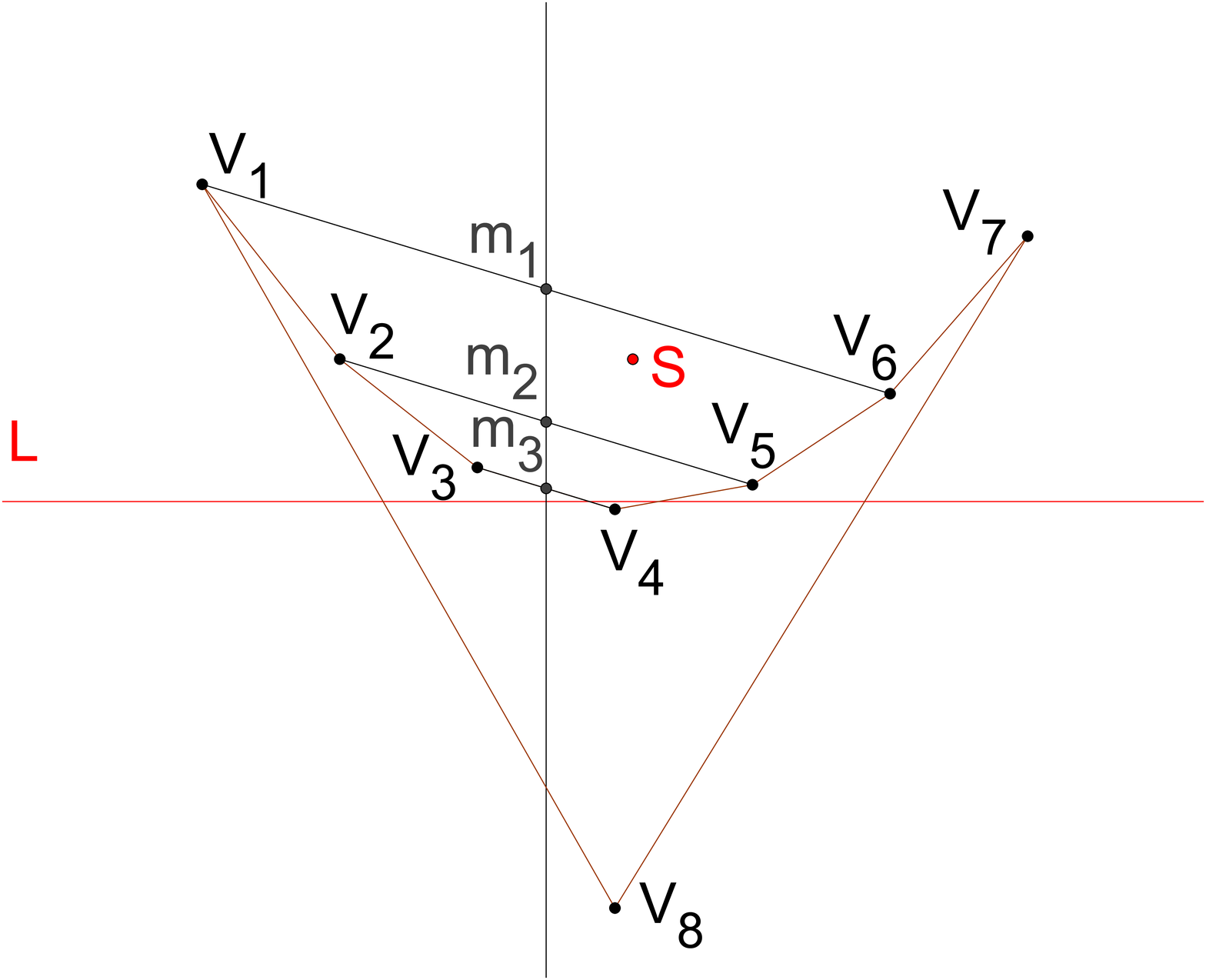}

\caption{Points $V_{1},...,V_{8}$ are the vertices of an equidistant Simson
octagon with Simson point $S$ and Simson line $L$. By Theorem \ref{thm:Parallel chords},
the segments $V_{1}V_{6}$, $V_{2}V_{5}$ and $V_{3}V_{4}$ are parallel,
and their midpoints $m_{1}$, $m_{2}$ and $m_{3}$ all lie on a line
perpendicular to $L$. }
\end{figure}

The following property of Simson polygons can be viewed as a discrete
analog of the isogonal property of the parabola.
\begin{prop}
\label{prop:discrete reflection property}Let $S$ and $L$ be the
Simson point and Simon line of a Simson polygon (not necessarily equidistant)
with vertices $V_{1},...,V_{n}$ and define $X_{1},...,X_{n}$ as
before. Let $V_{i}'$ be the reflection of $V_{i}$ in $L$. Then
the lines $V_{i}X_{i}$ and $V_{i}X_{i+1}$ are isogonal with respect
to the lines $V_{i}V_{i}'$ and $V_{i}S$ (i.e. $\angle V_{i}'V_{i}X_{i}=\angle X_{i+1}V_{i}S$)
for $i=1,...,n$.\end{prop}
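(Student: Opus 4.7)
The plan is to exploit two auxiliary observations. First, by the defining perpendicularity of the pedal points, $V_iX_i\perp SX_i$ and $V_iX_{i+1}\perp SX_{i+1}$ (each $X_j$ is the foot of the perpendicular from $S$ to the side through $V_{j-1}V_j$, and $V_i$ lies on both sides meeting at $X_i$ and $X_{i+1}$). Consequently, the four points $S,X_i,V_i,X_{i+1}$ all lie on a common circle $\omega_i$, namely the one having $SV_i$ as diameter. Second, since $V_i'$ is the reflection of $V_i$ across $L$, we have $V_iV_i'\perp L$, while every $X_j$ lies on $L$.

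The key step is a $90^{\circ}$ rotation of angles at $V_i$. The rays $V_iV_i'$ and $V_iX_i$ are perpendicular to $L$ and to $SX_i$ respectively, so because rotations preserve the angle between two lines,
\[
\angle V_i'V_iX_i \;=\; \angle(L,SX_i) \;=\; \angle X_{i+1}X_iS,
\]
where the last equality uses $X_{i+1}\in L$.

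To conclude, apply the inscribed angle theorem inside $\omega_i$: both $\angle X_{i+1}X_iS$ and $\angle X_{i+1}V_iS$ are inscribed angles subtending the chord $SX_{i+1}$, and a short configuration check shows that $X_i$ and $V_i$ lie on the same arc cut by this chord, so the two angles are equal. Chaining the identities yields $\angle V_i'V_iX_i = \angle X_{i+1}V_iS$, which is the isogonality claim.

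I expect the only subtlety to be the same-arc verification in the inscribed angle step, since the proposition assumes neither convexity nor the equidistant condition on $\Pi$. The cleanest way around the casework is to phrase the entire argument in directed angles modulo $\pi$: the two identities above then hold uniformly in every configuration, and one recovers the unsigned statement of the proposition after noting that in any non-degenerate Simson polygon the rays $V_iV_i',V_iX_i,V_iS,V_iX_{i+1}$ are pairwise distinct and emanate from $V_i$ with compatible orientations.
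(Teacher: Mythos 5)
Your argument is correct: the concyclicity of $S,X_i,V_i,X_{i+1}$ on the circle with diameter $SV_i$, the $90^{\circ}$ rotation identifying $\angle V_i'V_iX_i$ with $\angle X_{i+1}X_iS$, and the inscribed-angle step together give exactly the claimed isogonality, and working with directed angles modulo $\pi$ handles the configuration issues. The paper dismisses this as ``a straightforward angle count'' without giving details, and your write-up is precisely that angle count made explicit, so the approach is essentially the same.
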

\begin{proof}
The proof is by a straightforward angle count.
\end{proof}
In the case when the Simson polygon in Proposition \ref{prop:discrete reflection property}
is equidistant, we can take limits to obtain the isogonal property
of the parabola:
\begin{cor}
\label{cor:focal properpty reflect}Let $C$ be a parabola with focus
$F$ and tangent line $L$ at its vertex. Let $X$ be any point on
$C$ and $K$ the tangent at $X$. Furthermore, let $X'$ be the reflection
of $X$ in $L$. Then $K$ forms equal angles with $X'X$ and $FX$.
\end{cor}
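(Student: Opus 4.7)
The plan is to deduce this corollary from Proposition \ref{prop:discrete reflection property} by the limiting procedure used in passing from Theorem \ref{thm:Parabola} to Corollary \ref{cor:Parabola Proj property}: view $C$ as the $\Delta \to 0$ limit of equidistant Simson polygons with Simson point $S = F$ and Simson line $L$ equal to the tangent to $C$ at its vertex. Fix $X \in C$, and for each small $\Delta > 0$ choose $X_1$ on $L$ and an index $i$ so that the corresponding vertex $V_i = V_i(X_1, \Delta)$ satisfies $V_i \to X$ as $\Delta \to 0$; this is possible since the explicit formula for $V_i$ in the proof of Theorem \ref{thm:Parabola} is continuous in $(X_1, \Delta)$, and one may solve for $X_1$ in terms of the desired first coordinate of $V_i$.

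Next I identify the limits of all objects appearing in Proposition \ref{prop:discrete reflection property}. Continuity gives $V_i \to X$ and $V_i' \to X'$; since $S = F$ is fixed, the ray $V_iS$ tends to the focal radius $XF$. The essential observation is that both sides of the polygon incident to $V_i$ --- the side $V_{i-1}V_i$ containing $X_i$ and the side $V_iV_{i+1}$ containing $X_{i+1}$ --- tend to the tangent $K$ to $C$ at $X$. Indeed $|X_iX_{i+1}| = \Delta \to 0$, and the common limit $P$ of $X_i$ and $X_{i+1}$ is the foot of the perpendicular from $F$ onto $K$, which lies on $L$ by Corollary \ref{cor:Parabola Proj property}. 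Consequently the rays $V_iX_i$ and $V_iX_{i+1}$, both contained in sides of the polygon, converge to the ray $XP$ along $K$.

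Passing to the limit in the identity $\angle V_i'V_iX_i = \angle X_{i+1}V_iS$ provided by Proposition \ref{prop:discrete reflection property} yields $\angle X'XP = \angle PXF$. Since $XP$ is a ray along the tangent $K$, this is precisely the statement that $K$ makes equal angles with $X'X$ and $FX$. The only point requiring care --- verifiable directly from the coordinates in the proof of Theorem \ref{thm:Parabola} --- is that $V_iX_i$ and $V_iX_{i+1}$ collapse to the \emph{same} ray along $K$, so that the two limiting angles are measured on a single side of $X$ and the equality is not confused with its supplement.
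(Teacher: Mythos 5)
Your proposal is correct and follows exactly the route the paper intends: the paper derives this corollary from Proposition \ref{prop:discrete reflection property} by the one-line remark that one ``can take limits'' in the equidistant case, and your argument is a careful elaboration of that same limit, correctly identifying that both sides incident to $V_i$ (the lines $V_iX_i$ and $V_iX_{i+1}$) collapse to the tangent $K$ at $X$. The extra care you take about the two rays converging to the same ray along $K$ (so the equality is not confused with its supplement) is a detail the paper omits entirely.
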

\begin{figure}[h]
\includegraphics[scale=0.3]{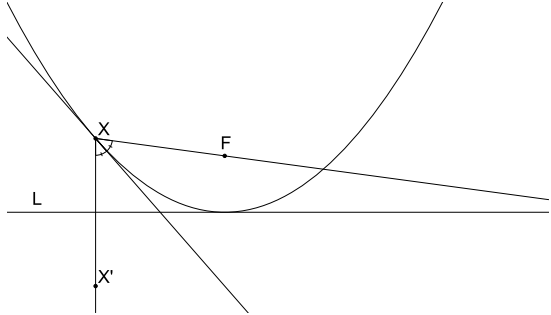}

\caption{Corollary \ref{cor:focal properpty reflect}: $X$ is a variable point
of the parabola $C$, $F$ is the focus, $L$ is the tangent to $C$
at its vertex and $X'$ is the reflection of $X$ in $L$. The lines
$XF$ and $XX'$ form equal angles with the tangent at $X$.}
\end{figure}

Using the same setup as in Theorem \ref{thm:Parabola} for an equidistant
Simson polygon,
\begin{thm}
\label{thm:midpoint parabola}Let $M_{i}$ be the midpoint of $V_{i}V_{i+1}$,
$i=1,...,n-2$. Then the midpoints $M_{i}$ lie on a parabola $C'$
with focus $S$ and tangent line at its vertex $L$.\end{thm}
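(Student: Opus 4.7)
The plan is to re-use the explicit coordinates computed in the proof of Theorem \ref{thm:Parabola} and just evaluate them at the midpoints. Set $S=(0,s)$, let $L$ be the $x$-axis, and take $X_i = (X+(i-1)\Delta,0)$, so that Theorem \ref{thm:Parabola} gives
\[
V_i = \left(2X+(2i-1)\Delta,\ \tfrac{(X+(i-1)\Delta)(X+i\Delta)}{s}\right),\qquad V_{i+1}=\left(2X+(2i+1)\Delta,\ \tfrac{(X+i\Delta)(X+(i+1)\Delta)}{s}\right).
\]

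Now I would compute $M_i = \tfrac{1}{2}(V_i+V_{i+1})$ directly. The $x$-coordinate is immediate, namely $2X+2i\Delta = 2(X+i\Delta)$. The key observation, which keeps the calculation painless, is that both $y$-coordinates share the common factor $X+i\Delta$, so
\[
y_{M_i} = \tfrac{X+i\Delta}{2s}\bigl[(X+(i-1)\Delta)+(X+(i+1)\Delta)\bigr] = \tfrac{(X+i\Delta)^2}{s}.
\]
Writing $u = x_{M_i} = 2(X+i\Delta)$ gives $y_{M_i} = u^2/(4s)$, so every $M_i$ lies on the curve $C'\colon y = x^2/(4s)$, independent of $X$ and $i$.

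It then remains to identify $C'$ geometrically. The equation $x^2 = 4sy$ is the standard parabola with vertex at the origin and directrix $y=-s$, hence focus at $(0,s) = S$, with the tangent at the vertex being the $x$-axis, i.e.\ $L$. This gives exactly the claim.

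There is no serious obstacle here: once the coordinates from Theorem \ref{thm:Parabola} are in hand, the only thing to notice is the common factor $X+i\Delta$ in the $y$-coordinates of $V_i$ and $V_{i+1}$, which collapses the midpoint $y$-coordinate to a perfect square and makes the parabola equation drop out. The mild subtlety worth flagging in the write-up is the shift of focal parameter: the original parabola through the $V_i$ was $y = (x^2-\Delta^2)/(4s)$, with focus shifted away from $S$ by $\Delta^2/(4s)$, whereas the midpoint parabola $y=x^2/(4s)$ is precisely the one focused at $S$ with $L$ as vertex tangent, which is the statement we want.
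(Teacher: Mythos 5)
Your proof is correct and follows essentially the same route as the paper: compute $M_i=\bigl(2(X+i\Delta),\tfrac{(X+i\Delta)^2}{s}\bigr)$ from the coordinates in Theorem \ref{thm:Parabola} and recognize $y=x^2/(4s)$ as the parabola with focus $S$ and vertex tangent $L$. The only thing the paper adds (not strictly needed for the statement) is the observation that the slope of $V_iV_{i+1}$ equals the slope of $C'$ at $M_i$, i.e.\ each side is tangent to $C'$ at its midpoint, which is what feeds into the subsequent reflection corollary.
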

\begin{proof}
Since $V_{i}=(2X+(2i-1)\Delta,\frac{(x+(i-1)\Delta)(x+i\Delta)}{s})$,
\[
M_{i}=(2(X+i\Delta),\frac{(X+i\Delta)^{2}}{s}).
\]

\noindent \begin{flushleft}
Therefore the $M_{i}$ lie on the parabola $p(x)=\frac{x^{2}}{4s}$
with focus $S$. The slope of $V_{i}V_{i+1}$ is $\frac{X+i\Delta}{s}$,
which is the same as that of $p(x)$ at $M_{i}$.
\par\end{flushleft}
\end{proof}
In a coordinate system where $S$ lies above $L$, the parabolas $C$
and $C'$ form sharp upper and lower bounds to the piecewise linear
curve $f(x)$ formed by the sides connecting $V_{1},...,V_{n-1}$
(discussed in Theorem \ref{thm:The-optimal-piecewise-continuous}).
Informally, one can think of $C$ and $C'$ as ``sandwiching'' $f(x)$,
and in the limit $n\rightarrow\infty$ and $\Delta\rightarrow0$,
the two curves coincide and equal the limit of the polygon.

The following result is a discrete analog of the famous optical reflection
property of the parabola. 
\begin{cor}
\label{cor:main focal}Let $M_{i}$ be the midpoints of $V_{i}V_{i+1}$
as in Theorem \ref{thm:midpoint parabola} and $p_{i}$ be the line
passing through $M_{i}$ orthogonal to $L$ for $i=1,2,...,n-2$.
Then the reflection $p_{i}'$ of $p_{i}$ in $V_{i}V_{i+1}$ passes
through $S$ for each $i=1,2,...,n-2$.
\end{cor}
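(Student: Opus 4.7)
The plan is to obtain this corollary as an immediate consequence of Theorem \ref{thm:midpoint parabola} together with the isogonal property of the parabola established in Corollary \ref{cor:focal properpty reflect}. By Theorem \ref{thm:midpoint parabola}, the midpoints $M_i$ lie on a parabola $C'$ whose focus is $S$ and whose tangent at the vertex is $L$; moreover each chord $V_iV_{i+1}$ is the tangent to $C'$ at $M_i$. Since $p_i$ is perpendicular to $L$, it is parallel to the axis of $C'$, so the reflection $M_i^{*}$ of $M_i$ in $L$ lies on $p_i$.

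Now I would apply Corollary \ref{cor:focal properpty reflect} to the parabola $C'$ at the point $M_i$: the tangent $V_iV_{i+1}$ forms equal angles with the lines $M_iM_i^{*}$ and $M_iS$. The line $M_iM_i^{*}$ is just $p_i$, and both $p_i$ and $p_i'$ pass through $M_i$ (since $M_i$ lies on the mirror line $V_iV_{i+1}$ and is therefore fixed by the reflection). Consequently $p_i'$ is exactly the line $M_iS$, which contains $S$ by definition.

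The argument is structural and essentially forced; there is no genuine obstacle to overcome, beyond first recognising the correct auxiliary parabola $C'$ in which to apply the focal reflection law. Should one prefer a proof independent of Corollary \ref{cor:focal properpty reflect}, the same identity can be verified by a short coordinate computation using the formulas from the proofs of Theorems \ref{thm:Parabola} and \ref{thm:midpoint parabola}: setting $a := X + i\Delta$, the reflection of the direction vector $(0,1)$ of $p_i$ across a line of slope $a/s$ yields a vector proportional to $(2as,\; a^{2}-s^{2})$, which is parallel to $S - M_i = -\tfrac{1}{s}(2as,\; a^{2}-s^{2})$, and the conclusion follows.
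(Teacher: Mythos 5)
Your argument is correct and is essentially the derivation the paper intends: the corollary is stated there without proof as a consequence of Theorem \ref{thm:midpoint parabola}, whose proof shows that $V_iV_{i+1}$ is tangent to $C'$ at $M_i$, so the focal reflection (isogonal) property of the parabola $C'$ with focus $S$ and vertex tangent $L$ gives the claim at once. Your supplementary coordinate computation with $a=X+i\Delta$ is also correct and has the advantage of being self-contained, since Corollary \ref{cor:focal properpty reflect} is itself only obtained in the paper by a limiting argument.
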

\begin{figure}[h]
\includegraphics[scale=0.2]{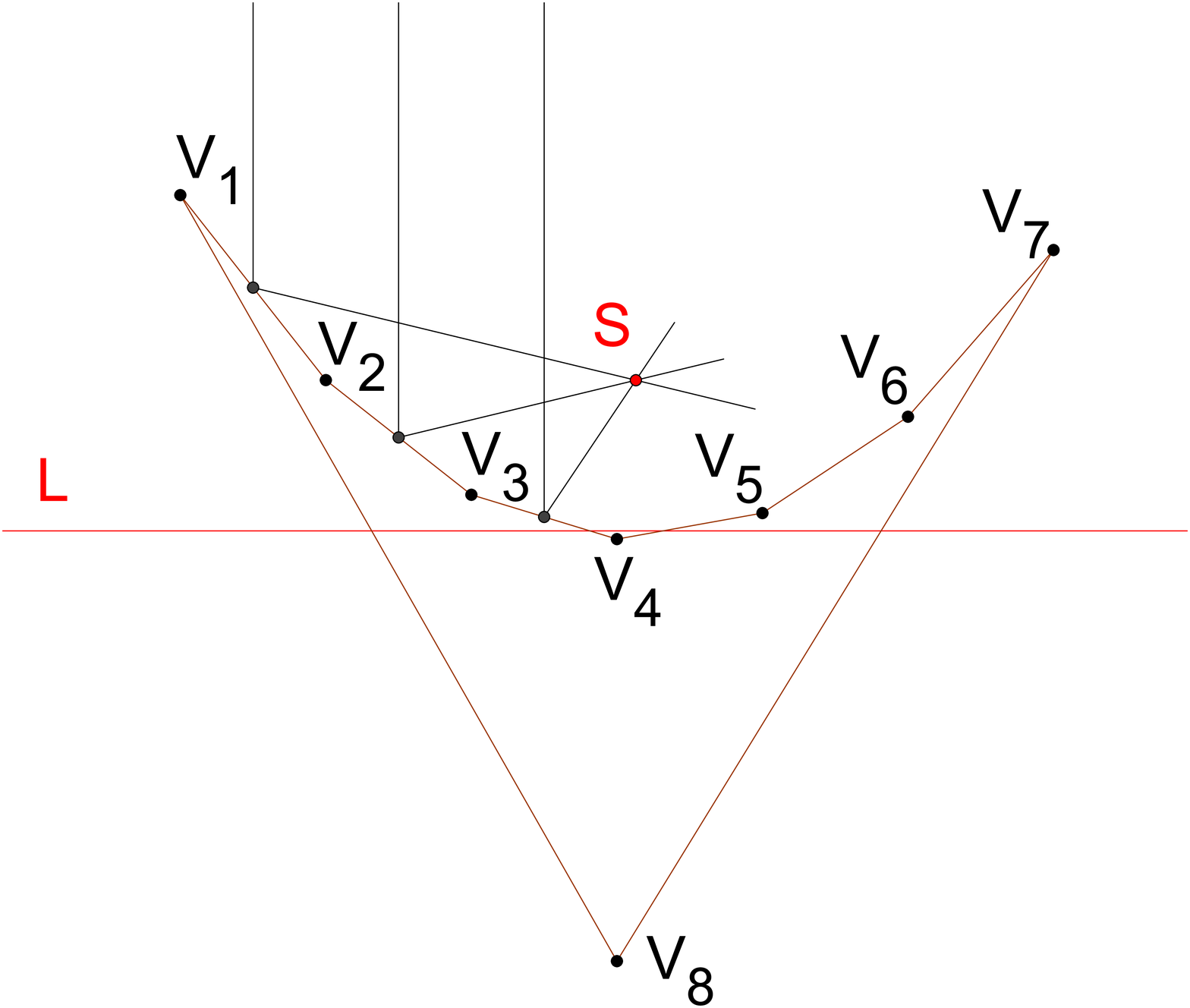}

\caption{Corollary \ref{cor:main focal}: $\Pi=V_{1}\cdot\cdot\cdot V_{8}$
is an equidistant Simson polygon. The reflections at the midpoints
of the sides of $\Pi$ of rays orthogonal to $L$ pass through $S$.}
\end{figure}

Let $X$ and $Y$ be two points on a parabola $C$. The triangle formed
by the two tangents at $X$ and $Y$ and the chord connecting $X$
and $Y$ is called an \textit{Archimedes Triangle \cite{Bogomolny Parabola}.
}The chord of the parabola is called the triangle's base. One of the
results stated in Archimedes' Lemma is that if $Z$ is the vertex
opposite to the base of an Archimedes triangle and $M$ is the midpoint
of the base, then the median $MZ$ is parallel to the axis of the
parabola. The following result yields a discrete analog to Archimedes'
Lemma. Let $V_{1}\cdot\cdot\cdot V_{n}$ be an equidistant Simson
polygon.
\begin{thm}
Let $W_{i,j}=V_{i}V_{i+1}\cap V_{j}V_{j+1}$ for each $i,j\in\{1,2,...,n-2\}$
and $i\neq j$. Let $M_{i,j+1}$ and $M_{i+1,j}$ be the respective
midpoints of chords $V_{i}V_{j+1}$ and $V_{i+1}V_{j}$. Then $W_{i,j}M_{i,j+1}$
and $W_{i,j}M_{i+1,j}$ are orthogonal to $L$.\end{thm}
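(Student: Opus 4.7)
The plan is to work in the coordinates of Theorem~\ref{thm:Parabola}: $S = (0,s)$, $L$ is the $x$-axis, and $V_k = \bigl(2X+(2k-1)\Delta,\,\tfrac{(X+(k-1)\Delta)(X+k\Delta)}{s}\bigr)$. Since $L$ is the $x$-axis, a line is orthogonal to $L$ precisely when it is vertical, so the claim reduces to showing that $W_{i,j}$, $M_{i,j+1}$, and $M_{i+1,j}$ share a common $x$-coordinate.

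Setting $a_k := X + (k-1)\Delta$ puts $V_k$ in the symmetric form $(a_k + a_{k+1},\, a_k a_{k+1}/s)$. A short computation gives the slope of $V_k V_{k+1}$ as $a_{k+1}/s$, and point-slope substitution yields the factored equation $sy = a_{k+1}(x - a_{k+1})$ for this line (which incidentally passes through $X_{k+1} = (a_{k+1}, 0)$, as the Simson setup demands). Solving $a_{i+1}(x - a_{i+1}) = a_{j+1}(x - a_{j+1})$ for $i \neq j$ then immediately gives $x_{W_{i,j}} = a_{i+1} + a_{j+1}$.

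For the midpoints, the $x$-coordinate of $M_{i,j+1}$ is $\tfrac12\bigl((a_i + a_{i+1}) + (a_{j+1} + a_{j+2})\bigr)$ and that of $M_{i+1,j}$ is $\tfrac12\bigl((a_{i+1} + a_{i+2}) + (a_j + a_{j+1})\bigr)$. Since the $a_k$ form an arithmetic progression, the pairings $a_i + a_{j+2} = a_{i+1} + a_{j+1} = a_{i+2} + a_j$ all hold, so both averages collapse to $a_{i+1} + a_{j+1}$. Hence $W_{i,j}$, $M_{i,j+1}$ and $M_{i+1,j}$ lie on the vertical line $x = a_{i+1} + a_{j+1}$, proving both lines in the statement are orthogonal to $L$. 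There is no real obstacle here; the only conceptual observation is the factored form of the tangent equation, which makes the symmetry in the indices $i+1$ and $j+1$ transparent and lets the result fall out by inspection.
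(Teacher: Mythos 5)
Your proof is correct and follows essentially the same route as the paper: both compute the common $x$-coordinate $2X+(i+j)\Delta = a_{i+1}+a_{j+1}$ of the two midpoints and of the intersection point $W_{i,j}$ of the lines $sy=(X+k\Delta)(x-(X+k\Delta))$. Your substitution $a_k = X+(k-1)\Delta$ is only a notational repackaging of the paper's calculation.
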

\begin{proof}
As shown in the proof of Theorem \ref{thm:Parallel chords}, the $x$-coordinate
of $M_{i,j+1}$ is $2X+(i+j)\Delta$ and that of $M_{i+1,j}$ is the
same. The point $W_{i,j}$ is the intersection of the line $V_{i}V_{i+1}$
given by $\mbox{ }y=\frac{X+i\Delta}{s}x-\frac{(X+i\Delta)^{2}}{s}$
and the line $V_{j}V_{j+1}$ given by $\mbox{ }y=\frac{X+j\Delta}{s}x-\frac{(X+j\Delta)^{2}}{s}$,
so that $W_{i,j}=(2X+(i+j)\Delta,\frac{(X+i\Delta)(X+j\Delta)}{s})$.\end{proof}
\begin{cor}
The points $W_{i,j+1},W_{i+1,j}$, $W_{i+2,j-1}$, etc. and the points
$M_{i,j+1}$, $M_{i+1,j}$, $M_{i+2,j-1}$, are collinear. The line
on which they lie is orthogonal to $L$.
\end{cor}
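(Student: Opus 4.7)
The plan is to read off both claims directly from the coordinate expressions established in the preceding theorem. In the coordinate system of Theorem \ref{thm:Parabola}, $L$ is the $x$-axis, so ``orthogonal to $L$'' simply means ``vertical''; it therefore suffices to show that each listed family of points has a constant $x$-coordinate.

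First I would recall the two relevant coordinate formulas. The proof of the preceding theorem shows that $W_{a,b}$ has $x$-coordinate $2X + (a+b)\Delta$. Using $V_k = \bigl(2X + (2k-1)\Delta,\,(X+(k-1)\Delta)(X+k\Delta)/s\bigr)$ from the proof of Theorem \ref{thm:Parabola}, the midpoint of the chord $V_a V_b$ has $x$-coordinate
\[
\tfrac{1}{2}\bigl((2X+(2a-1)\Delta) + (2X+(2b-1)\Delta)\bigr) = 2X + (a+b-1)\Delta.
\]

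Next I would apply these formulas to the two families appearing in the statement. Writing the listed intersections as $W_{i+k,\,j+1-k}$ and the listed midpoints as $M_{i+k,\,j+1-k}$ for $k=0,1,2,\ldots$, the index sum is $(i+k)+(j+1-k)=i+j+1$ in both cases, independently of $k$. Hence every $W$ in the list has $x$-coordinate $2X+(i+j+1)\Delta$, and every $M$ in the list has $x$-coordinate $2X+(i+j)\Delta$. Each family therefore lies on a vertical line, and each such line is orthogonal to $L$, as claimed.

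Essentially no obstacle remains once the coordinate formulas from the preceding theorem are in hand; the entire content is the observation that both $W_{a,b}$ and $M_{a,b}$ have an $x$-coordinate depending only on the index sum $a+b$, so any index substitution preserving this sum keeps the resulting points on a common vertical line.
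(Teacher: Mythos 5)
Your method --- reading the $x$-coordinates off the coordinate formulas of the preceding theorem and observing that they depend only on the index sum --- is exactly the intended route; the paper prints no proof for this corollary, meaning it to follow at once from $W_{a,b}=(2X+(a+b)\Delta,\,\cdot\,)$ and the midpoint formula. Your two formulas are also correct: $W_{a,b}$ has $x$-coordinate $2X+(a+b)\Delta$, while $M_{a,b}$ (midpoint of $V_aV_b$) has $x$-coordinate $2X+(a+b-1)\Delta$.

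The problem is that your conclusion does not actually establish the statement, and you pass over the discrepancy. The corollary asserts that \emph{all} of the listed points lie on one line (``The line on which they lie is orthogonal to $L$''), whereas your computation places the $W$'s on $x=2X+(i+j+1)\Delta$ and the $M$'s on $x=2X+(i+j)\Delta$ --- two distinct parallel verticals a distance $\Delta$ apart. The phrase ``each such line is orthogonal to $L$'' quietly substitutes two lines for the claimed single line, so you have proved only the weaker fact that each family is separately collinear; worse, your own numbers show that the statement, with the indices taken literally, is false. The resolution is an index slip in the corollary: the preceding theorem puts $W_{i,j}$, $M_{i,j+1}$, $M_{i+1,j}$ on a common vertical, i.e.\ the $W$ must have index sum one less than the $M$'s (your $a+b$ versus $a+b-1$), so the first list should read $W_{i,j},W_{i+1,j-1},W_{i+2,j-2},\ldots$ With that correction every listed point has $x$-coordinate $2X+(i+j)\Delta$ and the single-line claim holds. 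A complete answer needs to notice this mismatch and either repair the indices or state plainly that the literal claim fails.
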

Taking limits, we get the following Corollary which includes the part
of Archimedes\textquoteright{} Lemma stated previously:
\begin{cor}
\label{cor:Archimedes generalized}The vertices opposite to the bases
of all Archimedes triangles with parallel bases lie on a single line
parallel to the axis of the parabola and passing through the midpoints
of the bases.
\end{cor}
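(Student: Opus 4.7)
The plan is to derive this corollary from the preceding one by the limit argument suggested by the author's phrase ``taking limits.'' I would first set up coordinates as in the proofs of Theorems \ref{thm:Parabola} and \ref{thm:midpoint parabola}: place $L$ on the $x$-axis, $S = (0,s)$ above $L$, and let $C' \colon y = x^{2}/(4s)$ be the parabola of Theorem \ref{thm:midpoint parabola}, whose focus is $S$ and whose tangent at the vertex equals $L$.

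Fix an Archimedes triangle on $C'$ with tangent points $X, Y \in C'$ and vertex $Z$ opposite the base $XY$, where $Z$ is the intersection of the tangents at $X$ and $Y$. To approximate this configuration discretely, I would take a sequence of equidistant Simson $(n+2)$-gons with mesh $\Delta = \Delta(n) \to 0$; by Theorem \ref{thm:midpoint parabola}, the midpoints $M_{k} = (V_{k}+V_{k+1})/2$ lie on $C'$ and each side $V_{k}V_{k+1}$ is the tangent line to $C'$ at $M_{k}$. I would then choose indices $i = i(n)$ and $j = j(n)$ so that $M_{i(n)} \to X$ and $M_{j(n)} \to Y$; the sides $V_{i}V_{i+1}$ and $V_{j}V_{j+1}$ converge to the tangents at $X$ and $Y$, and hence their intersection $W_{i,j}$ converges to $Z$.

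Next I would apply the preceding corollary: $W_{i,j}$ and the chord midpoint $M_{i,j+1} = (V_{i}+V_{j+1})/2$ lie on a common line perpendicular to $L$. A short calculation using $V_{k} = M_{k} + O(\Delta)$ shows that $M_{i,j+1} = (M_{i}+M_{j})/2 + O(\Delta)$, so in the limit this midpoint tends to the midpoint of the chord $XY$ on $C'$. Because ``perpendicular to $L$'' is the axial direction of $C'$, the limit line is parallel to the axis; thus $Z$ and the midpoint of $XY$ lie on a common line parallel to the axis of the parabola, which in particular recovers the classical median-is-axial statement cited just before the corollary.

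Finally, for the ``parallel bases'' clause, I would observe that any chord of $C'$ joining $(a, a^{2}/(4s))$ to $(b, b^{2}/(4s))$ has slope $(a+b)/(4s)$, so a family of parallel chords is precisely one with $a + b$ constant. The midpoint of such a chord has abscissa $(a+b)/2$, and by the previous paragraph the corresponding Archimedes vertex shares this abscissa. Hence the entire family of midpoints and of Archimedes vertices lies on a single vertical line parallel to the axis of $C'$. The main obstacle will be making the convergence $W_{i,j} \to Z$ rigorous; but since the tangents at $X$ and $Y$ meet transversally at $Z$, standard continuity of line intersection under small perturbation handles this without difficulty.
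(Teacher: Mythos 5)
Your proposal is correct and takes essentially the same route as the paper: the paper offers no written proof beyond the phrase ``taking limits'' applied to the preceding discrete theorem and corollary, and your argument is precisely a careful execution of that limit, including the convergence of the sides to tangents of $C'$, of $W_{i,j}$ to the Archimedes vertex, and of the chord midpoints to the midpoint of the base. The only added content on your part is making the ``parallel bases'' clause explicit via the slope computation $(a+b)/(4s)$, which is a welcome clarification rather than a departure.
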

\begin{figure}[h]
\includegraphics[scale=0.4]{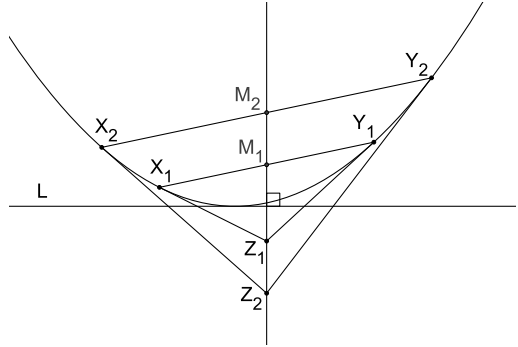}

\caption{Corollary \ref{cor:Archimedes generalized}: $\triangle X_{1}Y_{1}Z_{1}$
and $\triangle X_{2}Y_{2}Z_{2}$ are two Archimedes triangles with
parallel bases $X_{1}Y_{1}$, $X_{2}Y_{2}$. Points $Z_{1}$,$Z_{2}$
and the midpoints of the bases $M_{1},M_{2}$ all lie on a line parallel
to the axis of the parabola.}
\end{figure}

The final theorem to which we give generalization is \textit{Lambert's
Theorem}, which states that the circumcircle of a triangle formed
by three tangents to a parabola passes through the focus of the parabola
\cite{Bogomolny Parabola}. We can prove it using the Simson-Wallace
Theorem.

\begin{figure}[h]
\includegraphics[scale=0.35]{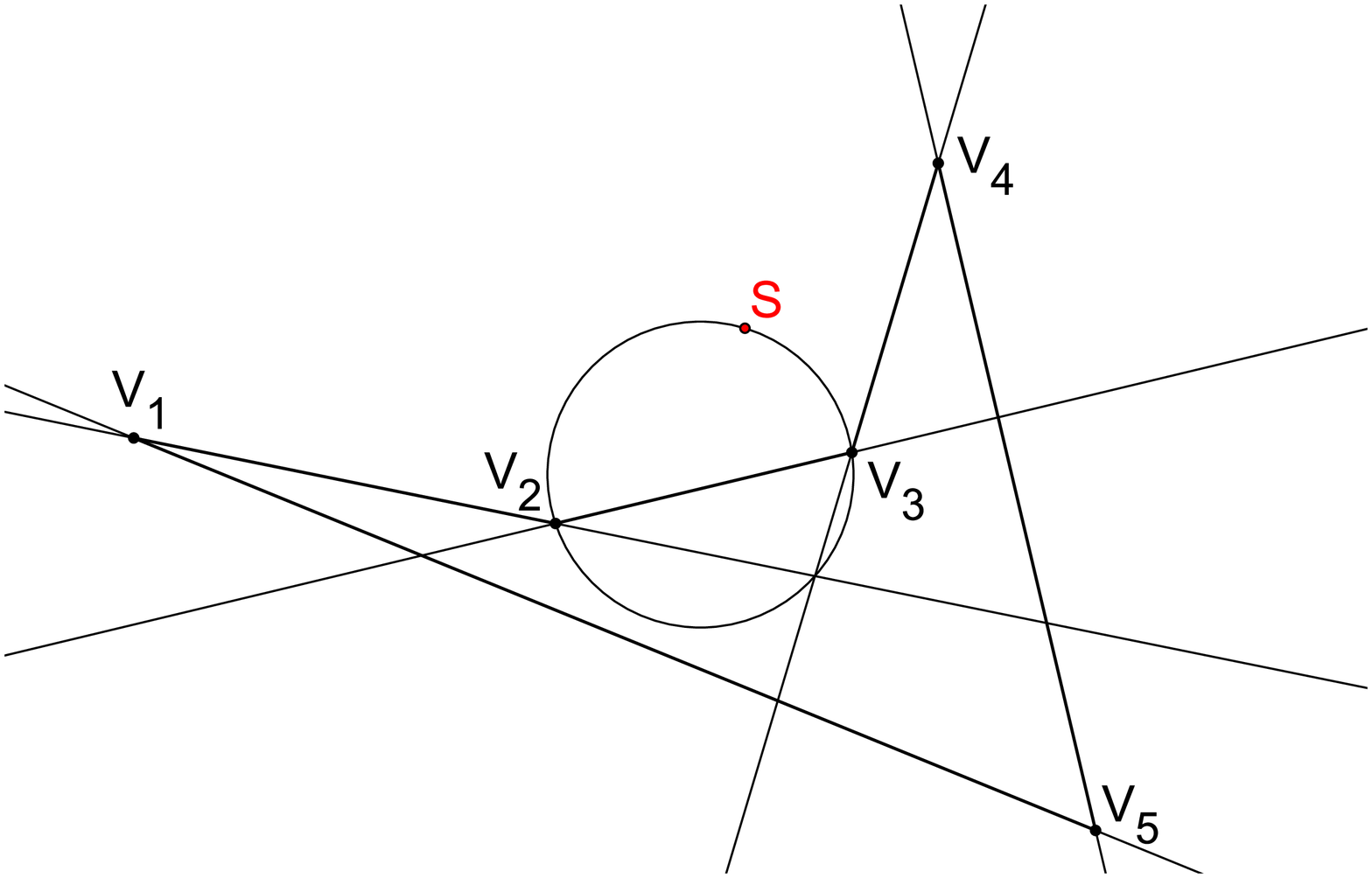}\includegraphics[scale=0.35]{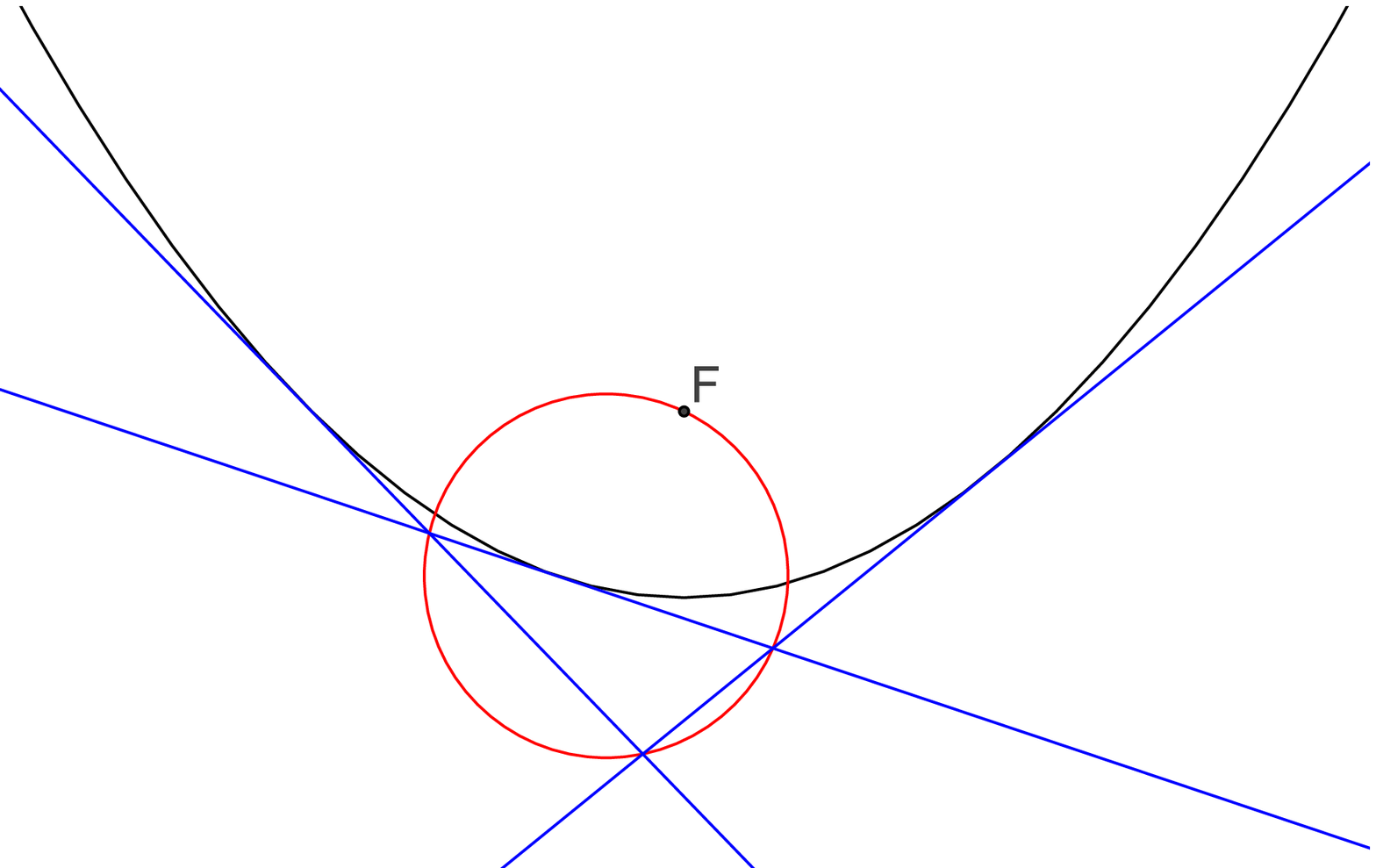}

\caption{Theorem \ref{thm:Discrete lambert} and Corollary \ref{cor:Lambert's-Theorem.}.}
\end{figure}

\begin{thm}
\label{thm:Discrete lambert}Let $V_{1}\cdot\cdot\cdot V_{n}$ be
a Simson polygon (not necessarily equidistant) with Simson point $S$.
Let $i,j,k\in\{1,2,...,n\}$, be distinct. Then the circumcircle of
the triangle $T$ formed from lines $V_{i}V_{i+1}$, $V_{j}V_{j+1}$
and $V_{k}V_{k+1}$ passes through $S$.\end{thm}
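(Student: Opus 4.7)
The plan is to reduce the statement directly to the Simson--Wallace Theorem (in the ``only if'' direction applied to the triangle $T$). The key observation is that the projection of $S$ onto the \emph{line} extending a side $V_\ell V_{\ell+1}$ depends only on that line, not on whether we regard the line as a side of the polygon $\Pi$ or as a side of the triangle $T$.

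More precisely, let me first set up notation matching the earlier parts of the paper. Since $S$ is a Simson point for $\Pi$, by definition the feet of the perpendiculars $X_1,\dots,X_n$ dropped from $S$ onto the lines $V_1V_2,\dots,V_nV_1$ all lie on a common line $L$ (the Simson line). In particular, the three points $X_i$, $X_j$, $X_k$ are collinear. These same three points are, by construction, the feet of the perpendiculars dropped from $S$ onto the three lines $V_iV_{i+1}$, $V_jV_{j+1}$, $V_kV_{k+1}$, which are exactly the three sidelines of the triangle $T$.

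So the pedal triangle of $S$ with respect to $T$ is degenerate: its three ``vertices'' $X_i, X_j, X_k$ are collinear. By the Simson--Wallace Theorem applied to the triangle $T$, this forces $S$ to lie on the circumcircle of $T$, which is exactly the claim. The Simson line of $S$ with respect to $T$ coincides with (the appropriate segment of) $L$.

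There is essentially no obstacle: the whole argument is a single invocation of the converse of Simson--Wallace once one notices that ``projection of $S$ onto a side of $\Pi$'' and ``projection of $S$ onto a side of $T$'' are \emph{literally the same point} whenever the two sides lie on the same line. The only minor care required is to ensure that the three chosen lines $V_iV_{i+1}$, $V_jV_{j+1}$, $V_kV_{k+1}$ are genuinely in general position so that they form a triangle $T$ (and none of them is parallel to another, in which case the ``circumcircle'' would degenerate); but this is a nondegeneracy hypothesis implicit in the statement that a triangle $T$ is formed.
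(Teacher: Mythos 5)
Your proposal is correct and is essentially identical to the paper's own proof: both observe that the feet of the perpendiculars from $S$ to the three chosen sidelines are collinear (lying on the Simson line $L$ of $\Pi$), so the converse direction of the Simson--Wallace Theorem applied to the triangle $T$ forces $S$ onto its circumcircle. Your added remark about the three lines being in general position is a reasonable nondegeneracy caveat but does not change the argument.
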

\begin{proof}
Since the projections of $S$ into $V_{i}V_{i+1}$, $V_{j}V_{j+1}$
and $V_{k}V_{k+1}$ are collinear, $S$ is a Simson point of the triangle
$T$. Therefore by the Simson-Wallace Theorem (Theorem \ref{(Simson-Wallace-Theorem)}),
$S$ lies on the circumcircle of $T$.\end{proof}
\begin{cor}
\label{cor:Lambert's-Theorem.}(Lambert's Theorem). The focus of a
parabola lies on the circumcircle of a triangle formed by any three
tangents to the parabola.\end{cor}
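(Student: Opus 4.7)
The plan is to prove Lambert's Theorem as a direct analog of the proof of Theorem \ref{thm:Discrete lambert}, using Corollary \ref{cor:Parabola Proj property} in place of the definition of a Simson point. In other words, the point of view is that a parabola may be treated as a ``Simson polygon with infinitely many sides'' whose ``Simson point'' is the focus $F$ and whose ``Simson line'' is the tangent line $L$ at the vertex. Once this dictionary is in hand, the same Simson--Wallace argument that established Theorem \ref{thm:Discrete lambert} applies verbatim.

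Concretely, I would argue as follows. Let $t_1, t_2, t_3$ be three (distinct, nonconcurrent) tangent lines to the parabola $C$ with focus $F$, and let $T$ be the triangle they form. For each $i$, let $P_i$ be the foot of the perpendicular from $F$ to $t_i$. By Corollary \ref{cor:Parabola Proj property}, the locus of projections of $F$ onto the tangents of $C$ is exactly the tangent $L$ to $C$ at its vertex; in particular each $P_i$ lies on $L$, so $P_1, P_2, P_3$ are collinear. But $P_1, P_2, P_3$ are precisely the pedal points of $F$ with respect to $T$. The Simson--Wallace Theorem (Theorem \ref{(Simson-Wallace-Theorem)}) then forces $F$ to lie on the circumcircle of $T$, which is the conclusion.

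An alternative, essentially equivalent, route is to deduce the result as a limit of Theorem \ref{thm:Discrete lambert}. For any three tangents to $C$, one can find nearby sides $V_iV_{i+1}$, $V_jV_{j+1}$, $V_kV_{k+1}$ of an equidistant Simson polygon whose Simson point $S$ converges to $F$ (as $\Delta \to 0$), by the continuity established in the proof of Corollary \ref{cor:Parabola Proj property}. Theorem \ref{thm:Discrete lambert} places $S$ on the circumcircle of the triangle formed by the three sides; passing to the limit, the three sides converge to the three prescribed tangents and the circumcircle depends continuously on the triangle, so $F$ lies on the circumcircle of the limiting triangle.

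I expect no serious obstacle: the substantive content has already been absorbed into Corollary \ref{cor:Parabola Proj property}, and the remaining work is the same three-line argument as in Theorem \ref{thm:Discrete lambert}. The only mild issue worth mentioning is the degenerate configurations (two parallel tangents, or three concurrent ones), which do not form a genuine triangle; these are excluded by the hypothesis that the three tangents actually form a triangle, so the statement is vacuous in those cases.
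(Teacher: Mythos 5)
Your proposal is correct, and your primary argument is a genuinely different (and in fact more self-contained) route than the paper's. The paper proves Lambert's Theorem purely by a limiting argument: it takes a sequence of equidistant Simson polygons whose sides $V_iV_{i+1}$, $V_jV_{j+1}$, $V_kV_{k+1}$ converge to the three prescribed tangents and invokes Theorem \ref{thm:Discrete lambert} together with continuity of the circumcircle --- exactly your ``alternative route.'' Your main argument instead applies the Simson--Wallace Theorem directly to the triangle of tangents: by Corollary \ref{cor:Parabola Proj property} the three pedal points of the focus lie on the tangent at the vertex, hence are collinear, so the focus lies on the circumcircle. This buys you rigor (no need to justify that an arbitrary triple of tangents is realizable as a limit of polygon sides, nor that circumcircles vary continuously) at no real cost, and it makes transparent that the tangent at the vertex plays the role of the Simson line of the parabola --- the very analogy the paper is advocating. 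One small remark: the degenerate configurations you set aside cannot actually occur for a parabola, since no two distinct tangents are parallel (their slopes are in bijection with the points of tangency) and no three distinct tangents are concurrent (through any point there pass at most two tangents to a conic), so every triple of distinct tangents genuinely forms a triangle.
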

\begin{proof}
Taking the limit of a sequence of equidistant Simson polygons gives
Lambert's Theorem for a parabola, since the lines $V_{i}V_{i+1}$,
$V_{j}V_{j+1}$, $V_{k}V_{k+1}$ become tangents in the limit.
\end{proof}

\section{Acknowledgments}

The author would like to express his gratitude to Olga Radko for valuable
feedback throughout the process of writing this paper. 

$ $

Emmanuel Tsukerman: Stanford University

\textit{E-mail address: emantsuk@stanford.edu}
\end{document}